\newcommand{\norm}[2]{\left\| {#1}\right\| _{#2}}
\newcommand{\abs}[1]{\left | {#1}\right | }
\newcommand{\all}[2]{ \left \{\, {#1} \, : \, {#2} \, \right \} }
\newcommand{\set}[1]{{\{#1\}}}
\newcommand{\qn}{quasi-nilpotent}
\newcommand{\norml}[1]{\norm{{#1}}{}}
\newcommand{\Si}{S_\infty}
\newcommand{\wh}{\widehat}
\newcommand{\cal}{\mathcal}
\newcommand{\N}{\mathbb{N}}
\newcommand{\Z}{\mathbb{Z}}
\newcommand{\R}{\mathbb{R}}
\newcommand{\C}{\mathbb{C}}
\newcommand{\lam}{\lambda}
\theoremstyle{plain}
\newtheorem{theorem}{Theorem}[section]
\newtheorem{proposition}[theorem]{Proposition}
\newtheorem{corollary}[theorem]{Corollary}
\theoremstyle{definition}
\newtheorem{definition}[theorem]{Definition}
\newtheorem{remark}[theorem]{Remark}
\newtheorem{notation}[theorem]{Notation}
\newtheorem{example}[theorem]{Example}
\newtheoremstyle{remarkbreak}
  {9pt}
  {9pt}
  {\upshape}
  {}
  {\bfseries}
  {.}
  {\newline}
  {}
\theoremstyle{remarkbreak}
\newtheorem{rembreak}[theorem]{Remark}
\begin{document}

\title[Resolvent estimates]
{Resolvent estimates for operators belonging to exponential classes}

\author[O.F.~Bandtlow]{Oscar F.~Bandtlow}
\address{
School of Mathematical Sciences\\
Queen Mary, University of London\\
London E3 4NS, UK}
\email{o.bandtlow@qmul.ac.uk}

\date{14 July 2008}
\keywords{Resolvent growth, exponential class, 
departure from normality, bounds for 
spectral distance}
\subjclass{Primary 47A10; Secondary 47B06, 47B07}

\begin{abstract}
For $a,\alpha>0$ let $E(a,\alpha)$ be the set of all compact operators $A$ on a
separable 
Hilbert space such that $s_n(A)=O(\exp( -an^\alpha))$, where $s_n(A)$
denotes the $n$-th singular number of $A$. We provide upper bounds for
the norm of the resolvent $(zI-A)^{-1}$ of $A$ in terms of a quantity describing the
departure from normality of $A$ and the distance of $z$ to the
spectrum of $A$. As a consequence we obtain upper bounds for the
Hausdorff distance of the spectra of two operators in $E(a,\alpha)$.  

\end{abstract}

\maketitle

\section{Introduction} 
Let $A$ and $B$ be compact operators on a Hilbert space. It is known that
if $\norm{A-B}{}$ is small then the spectra of $A$ and $B$ are
close in a suitable sense (for example, with respect to the Hausdorff
metric on the space of compact subsets of $\C$). Just how close are they?
Standard perturbation theory gives bounds in terms of quantities that
require a rather detailed knowledge of the spectral properties of both
operators, for example the norms of the resolvents of $A$ and $B$
along contours in the complex plane, which are difficult to
obtain in practice. 

The main concern of this article is to derive an upper bound for the
norm of the resolvent $(zI-A)^{-1}$ of an operator $A$ belonging 
to certain subclasses of compact operators in terms of 
simple, readily computable quantities, typically 
involving the distance of $z$ to the spectrum of $A$ and a number
measuring the departure from normality of $A$. 
As a result, we obtain simple upper bounds for 
the Hausdorff distance of the spectra of two operators in these
subclasses. Estimates of this type have previously been obtained for
operators in the Schatten classes (see \cite{Gil,Ban}) and more
generally (but less sharp), for
operators belonging to $\Phi$-ideals (see \cite{Pok}). 

These subclasses, termed \textit{exponential classes}, 
are constructed as follows. For $a, \alpha>0$ let
$E(a,\alpha)$ denote the collection 
of all compact operators on a separable Hilbert space for
which $s_n(A)=O(\exp(-an^{\alpha}))$, where $s_n(A)$ denotes the $n$-th
singular number of $A$. As we shall see, $E(a,\alpha)$ is not a linear
space (see Remark~\ref{rem2}), hence {\it a fortiori} not an operator
ideal, 
and may thus be
viewed as a slightly pathological object in this context. 
There is nevertheless compelling 
reason to consider these classes: on the one hand, 
the resolvent bounds given in \cite{Ban,Gil,Pok},
while applicable to operators in $E(a,\alpha)$, can be improved
significantly (see Remark~\ref{rem:improvement}), 
the lack of linear structure posing
almost no problem for the derivation of these improvements. 
On the other hand, operators belonging to exponential classes 
arise naturally in a
number of different ways. 
For example, if $A$ is an integral operator with real
analytic kernel given as a function on $[0,1]^d\times [0,1]^d$, then $A\in
E(a,1/d)$ for some $a>0$ (see \cite{KR}). Other examples 
of operators in the exponential class $E(a,1/d)$ for some $a>0$
include composition operators
on Bergman spaces over domains in $\C^d$ whose symbols are strict
contractions, or more generally  
transfer operators corresponding to holomorphic map-weight systems on
$\C^d$, the latter providing one of the motivations to look
more closely into the properties of operators belonging to exponential
classes (see Example~\ref{expoclassexamples}~(iv) 
and \cite{rates,ruelle,decay}). 

This article is organised as follows. In
Section~\ref{section:expoclass} we define the exponential
classes and study some of their properties. In particular, we shall
give a sharp description of the behaviour of exponential classes
under addition (see Proposition~\ref{propaddition}), 
and a sharp characterisation of the
eigenvalue asymptotics of an operator in a given exponential class (see
Proposition~\ref{prop:weylest}). 
Some of the arguments in
this section rely on results concerning monotonic arrangements of
sequences, which are presented in the Appendix. 
In Section~\ref{section:resest} we will use techniques similar to those
already 
employed in
\cite{Ban} to obtain resolvent estimates for operators in
$E(a,\alpha)$ (see Theorem~\ref{carle:theo}). 
In particular we shall give a sharp estimate for the
growth of the resolvent of a \qn\ operator in $E(a,\alpha)$ (see
Proposition~\ref{prop3}). 
These estimates will then be used in the final section to deduce
Theorem~\ref{theo:specdist}, which provides 
spectral variation and spectral distance formulae for operators in 
$E(a,\alpha)$.

\begin{notation}
Throughout this article $H$ and $H_i$ will be assumed to be 
separable Hilbert spaces.
We use $L(H_1,H_2)$ to denote
the Banach space of bounded linear
operators from $H_1$ to $H_2$ equipped with the usual norm 
and $\Si(H_1,H_2)\subset L(H_1,H_2)$
to denote the closed subspace of compact operators from $H_1$ to $H_2$.
We shall often write $L$ or $\Si$ if the Hilbert spaces $H_1$ and
$H_2$ are understood. 

For $A\in
\Si(H_1,H_2)$ we use
\[ s_k(A):= \inf \all{\norml{A-F}}{F\in L(H_1,H_2),\, 
{\rm rank}(F)<k}\quad (k\in\N)\]
to denote the \emph{$k$-th approximation number}
of $A$, and $s(A)$ to denote the sequence $\{s_k(A)\}_{k=1}^\infty$.

The spectrum and the resolvent set of $A\in L(H,H)$ will be denoted by 
$\sigma(A)$ and $\varrho(A)$, respectively. 
For $A\in\Si(H,H)$ we let $\lambda(A)=\set{\lam_k(A)}_{k=1}^\infty$ 
denote the
sequence of eigenvalues of $A$, 
each eigenvalue repeated according to
its algebraic multiplicity, and
ordered by magnitude, so that
$\abs{\lam_1(A)}\geq\abs{\lam_2(A)}\geq\ldots$. Similarly, we write
$|\lambda(A)|$ for the sequence $\set{|\lam_k(A)|}_{k=1}^\infty$. 

We note that the approximation numbers coincide with the
\emph{singular numbers}, that is,
$$
s_k(A)= \sqrt{\lambda_k(A^*A)}\quad (k\in\N)\,,
$$
where $A^*\in L(H_2,H_1)$ denotes the adjoint of $A\in L(H_1,H_2)$. 
For more
information about these notions see, for
example, \cite{Pie,GK,DS2, Rin}. 
\end{notation}

\section{Exponential classes}
\label{section:expoclass}
Exponential classes arise by grouping together all 
operators $A$ whose singular numbers $s_n(A)$ 
decay at a given (stretched) exponential rate, that is,
$s_n(A)=O(\exp(-an^\alpha))$ for fixed $a>0$ and $\alpha>0$.  
Our main concern in this section will be to
investigate how these classes behave under addition and
multiplication, and to determine the rate of decay of the eigenvalue
sequence of an operator in a given class. Some of the arguments in
this section depend on results concerning monotonic arrangements of 
sequences, which are discussed in the Appendix. 

\begin{definition}
  Let $a>0$ and $\alpha>0$. Then 
 \[  {\cal E} (a,\alpha):=\all{x\in \C^\N}{ |x|_{a,\alpha}:=\sup_{n\in\N}
  |x_n| \exp(an^{\alpha}) < \infty}, \]
and 
\[ E(a,\alpha;H_1,H_2):=\all{A\in\Si(H_1,H_2)}{ |A|_{a,\alpha}:=\sup_{n\in\N}
  s_n(A)\exp(an^{\alpha}) < \infty}. \]
are called {\it exponential classes of type $(a,\alpha)$ of sequences}
and {\it operators}, respectively. The numbers
$|x|_{a,\alpha}$ and  
$|A|_{a,\alpha}$ are called {\it $(a,\alpha)$-gauge} or simply {\it
  gauge} of $x$ and $A$, respectively.  
\end{definition}
Whenever the Hilbert spaces are clear from the context, we suppress
reference to them and simply write $E(a,\alpha)$ instead of
$E(a,\alpha;H_1,H_2)$. 

\begin{remark}
Note that ${\cal E}(a,\alpha)$ is a Banach space when equipped with the
gauge $|\cdot |_{a,\alpha}$. On the other hand, the set $E(a,\alpha)$, 
the non-commutative
analogue of ${\cal E}(a,\alpha)$, is not
even a linear space in general 
(see Proposition~\ref{propaddition} and Remark~\ref{rem2}
below). 
  The
  reason for this is that if a sequence lies in ${\cal E}(a,\alpha)$ then a
  rearrangement of this sequence need not; in particular ${\cal E}(a,\alpha)$
  is not a Calkin space in the sense of \cite[p.~26]{simon} (cf.~also
  \cite{calkin}).  However, $E(a,\alpha;H,H)$ turns out to be a 
\textit{pre-ideal} (see
  Remark~\ref{preidealrem}).
\end{remark}

Operators belonging to exponential classes arise naturally in a number
of different contexts. 
\begin{example}\label{expoclassexamples}\mbox{}

(i) Let $\sigma$ be a complex measure on the circle group $\cal T$
such that its Fourier transform satisfies $$|\wh \sigma (n)| \leq
\exp(-a|n|) \qquad (n \in \Z)\,.$$ 
It is not difficult to see that this is the case if and only if
$\sigma$ is absolutely continuous
with respect to Haar measure on $\cal T$ and the corresponding
Radon-Nikod\'{y}m derivative is holomorphic on $\cal T$.

Let $ L^2({\cal T})$ be the
complex Hilbert space of square-integrable functions on $\cal T$,
with respect to Haar measure on $\cal T$. Let $A: L^2({\cal
T}) \longrightarrow L^2({\cal T})$ be the convolution operator
$$Af= f * \sigma\,.$$
The spectrum of $A$ is $\wh \sigma(\Z) \cup\{0\}$ and the spectrum
of $A^*A$ equals $\wh{ \sigma * \widetilde\sigma}(\Z) \cup\{0\} =
|\wh \sigma(\Z)| \cup\{0\}$, where $d\widetilde\sigma(t) = d\sigma
(t^{-1})$ (cf. \cite[p.~87]{b}). Moreover, $A$ is
a compact operator and the non-zero eigenvalues of $A$ are precisely
the numbers $\wh \sigma (n)$ for $n\in\Z$. In order to locate $A$ in the
scale of exponential classes, we enumerate the eigenvalues of $A$ as follows
$$x_n
= \wh \sigma\left(\frac{(-1)^n}{4} (2n +(-1)^n -1)\right)\qquad (n
\in \N).$$ 
Then the sequence $x$ belongs to the class ${\cal
E}(a/2,1)$ with $|x|_{1/2,1}\leq \exp(a/2)$ since
\begin{eqnarray*}
& & \left|\widehat \sigma\left(\frac{(-1)^n}{4} (2n +(-1)^n
-1)\right)\right|\\& \leq & \exp\left(-a\left|\frac{(-1)^n}{4} (2n
+(-1)^n -1)\right|\right)\\ & \leq&
\exp\left(-\frac{a}{2}(n-1)\right) =
\exp\left(\frac{a}{2}\right)\exp\left(-\frac{a}{2}n\right).
\end{eqnarray*}
By Corollary~\ref{app:arrangementcoro} 
the decreasing arrangement $x^{(+)}$ of $x$ also
belongs to ${\cal E}(a/2,1)$ with $|x^{(+)}|_{1/2,1}\leq \exp(a/2)$. 
Thus $s(A)=|\lambda(A)|\in {\cal E}(a/2,1)$ and it follows that  
$A\in E(a/2,1)$ with $|A|_{a/2,1}\leq \exp(a/2)$. 

(ii) A variant of the above example is discussed by K\"onig 
and Richter~\cite{KR}, who showed that 
if $A$ is an integral operator on the space of Lebesgue
  square-integrable functions on the $d$-dimensional unit-cube
  $[0,1]^d$ whose kernel is real analytic on $[0,1]^d\times [0,1]^d$,
  then $A\in E(1/d)$. 

(iii) For a domain $\Omega\subset \R^d$ with $d>1$ 
let $h^2(\Omega)$ be the Bergman space
of Lebesgue square-integrable harmonic functions on $\Omega$. If
$\Omega_1,\Omega_2\subset \R^d$ are two domains such that $\Omega_2$
is \textit{compactly contained} in $\Omega_1$, that is
$\overline{\Omega_2}$ is a compact subset of $\Omega_1$, then the
canonical embedding $J:h^2(\Omega_1)\hookrightarrow h^2(\Omega_2)$
given by $Jf=f|\Omega_2$ belongs to the exponential class
$E(1/(d-1))$. Moreover, for domains $\Omega_1$ and $\Omega_2$
with simple geometries it is possible to sharply locate $J$ in an
exponential class $E(a,1/(d-1))$ and calculate the
corresponding $(a,1/(d-1))$-gauge of $J$ exactly. See \cite{BC}.  

(iv) For $\Omega\subset \C^d$ a bounded domain, let $L^2_{Hol}(\Omega)$
denote the Bergman space of holomorphic functions which are
square-integrable with respect to $2d$-dimensional Lebesgue measure on
$\Omega$. Given a collection $\set{\phi_1,\ldots,\phi_K}$ of
holomorphic maps $\phi_k:\Omega\to\Omega$ and a collection
$\set{w_1,\ldots,w_K}$ of bounded holomorphic functions
$w_k:\Omega\to\C$ consider the corresponding linear 
operator $A$ on $L^2_{Hol}(\Omega)$ given by
\[ Af:=\sum_{k=1}^Kw_k\cdot f\circ\phi_k\,.\]
If $\cup_{k}\phi_k(\Omega)$ is compactly contained in $\Omega$ (see
the previous example for the definition) then $A$ is a compact
endomorphism of $L^2_{Hol}(\Omega)$ and $A \in E(a,1/d)$, where $a$ depends
on the geometry of $\Omega$ and $\cup_{k}\phi_k(\Omega)$ (see
\cite{decay}). 

Operators of this type, known as {\it transfer operators}, 
play an important role in the ergodic theory of
expanding dynamical systems due to the remarkable fact 
that their spectral data can be used to gain   
insight into geometric and dynamic invariants of a given expanding 
dynamical system (see \cite{ruellebook}). As a consequence, it is of
interest to determine spectral properties of these operators exactly,
or at least to a given accuracy. The latter problem, namely that of
calculating rigorous error
bounds for spectral approximation procedures for these operators
provided one of the main motivations to study operators in exponential
classes (see  \cite{decay}). 
\end{example}

We shall now study some of the properties of the classes
$E(a,\alpha)$. First we note that if we order the indices $(a,\alpha)$
reverse lexicographically, that is, by defining 
\[ (a,\alpha)\prec (a',\alpha'):\Leftrightarrow (\alpha<\alpha') \text{
  or } (\alpha=\alpha' \text{ and } a<a'), \]
then we obtain the following inclusions.
\begin{proposition} Let $a,a'>0$ and $\alpha,\alpha'>0$. Then
\begin{itemize}
\item[(i)] $(a,\alpha)\prec (a',\alpha') \Leftrightarrow
  {\cal E}(a',\alpha')\subsetneqq {\cal E}(a,\alpha)$;
\item[(ii)] $(a,\alpha)\prec (a',\alpha') \Leftrightarrow
  E(a',\alpha')\subsetneqq E(a,\alpha)$.
\end{itemize}
\end{proposition}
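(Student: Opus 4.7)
The plan is to prove the two equivalences by splitting each into a forward inclusion and a strictness witness, then deriving the converses by contraposition. Both parts reduce to the same analytic fact about the function $f(n) = \exp(an^\alpha - a'n^{\alpha'})$.

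For the forward direction of (i), I would assume $(a,\alpha)\prec(a',\alpha')$ and show ${\cal E}(a',\alpha')\subseteq {\cal E}(a,\alpha)$. If $x\in{\cal E}(a',\alpha')$, then $|x_n|\exp(an^\alpha)\le |x|_{a',\alpha'}\exp(an^\alpha - a'n^{\alpha'})$, and the hypothesis $\alpha<\alpha'$, or $\alpha=\alpha'$ with $a<a'$, makes the exponent tend to $-\infty$, so the right side is bounded (and in fact one gets an explicit constant, yielding a bound on the gauge). For strictness, I would exhibit the sequence $x_n:=\exp(-an^\alpha)$, which lies in ${\cal E}(a,\alpha)$ with gauge $1$, while $|x_n|\exp(a'n^{\alpha'}) = \exp(a'n^{\alpha'}-an^\alpha)\to\infty$ under the same trichotomy, so $x\notin{\cal E}(a',\alpha')$.

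For the converse of (i), I would argue by contraposition: suppose $(a,\alpha)\not\prec(a',\alpha')$. Then either $(a,\alpha)=(a',\alpha')$, in which case the two classes coincide and the inclusion is not strict, or $(a',\alpha')\prec(a,\alpha)$, in which case the forward direction just proved yields ${\cal E}(a,\alpha)\subsetneqq {\cal E}(a',\alpha')$, which prevents ${\cal E}(a',\alpha')\subsetneqq {\cal E}(a,\alpha)$ (otherwise both inclusions would force equality together with a contradiction to strictness of the other). This handles (i).

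For (ii), the inclusion statements transfer directly through the identity $|A|_{a,\alpha}=|s(A)|_{a,\alpha}$, since $s(A)$ is a (nonnegative decreasing) sequence and membership of $A$ in $E(a,\alpha)$ is literally membership of $s(A)$ in ${\cal E}(a,\alpha)$. So the forward direction of (ii) follows instantly from that of (i). For strictness I would build a diagonal compact operator $A$ on $H_1$ (using orthonormal bases of $H_1$ and $H_2$) with singular numbers $s_n(A)=\exp(-an^\alpha)$; this shows $A\in E(a,\alpha)\setminus E(a',\alpha')$ via the sequence-level counterexample. The converse of (ii) then proceeds by the same contrapositive argument as in (i). The only minor obstacle is keeping the trichotomy clean — nothing deep is required, since the sharpness of $\prec$ is already encoded in the behaviour of $\exp(an^\alpha - a'n^{\alpha'})$.
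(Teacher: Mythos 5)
Your proposal is correct and follows essentially the same route as the paper: part (i) is the elementary analysis of $\exp(an^\alpha-a'n^{\alpha'})$ (which the paper omits as straightforward), and part (ii) is deduced from (i) via the equivalence $A\in E(a,\alpha)\Leftrightarrow s(A)\in{\cal E}(a,\alpha)$ together with the existence of a compact operator whose singular numbers realise a prescribed decreasing sequence — exactly your diagonal construction. (As with the paper's own argument, the strictness in (ii) tacitly requires the underlying Hilbert spaces to be infinite-dimensional.)
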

\begin{proof}
  The proof of (i) is straightforward and will be omitted. 
Assertion (ii) follows from (i) together with the observation that $A\in
E(a,\alpha)$ iff $s(A)\in{\cal E}(a,\alpha)$ and the fact that
for every monotonically decreasing 
$x\in{\cal E}(a,\alpha)$ there is a compact $A$ with $s(A)=x$. 

\end{proof}

While $E(a,\alpha)$ is not, in general, a
linear space, it does enjoy 
the following closure
properties.

\begin{proposition}\label{prop1} Let $a,\alpha>0$. 
If $A\in L(H_2,H_1)$, $B\in E(a,\alpha;H_3,H_2)$, and $C\in
L(H_4,H_3)$, 
then $\abs{ABC}_{a,\alpha}\leq \norml{A}\abs{B}_{a,\alpha}\norml{C}$. 
In particular,
  \[ L(H_2,H_1)\,E(a,\alpha;H_3,H_2)\,L(H_4,H_2)\subset 
    E(a,\alpha;H_4,H_1). \]
\end{proposition}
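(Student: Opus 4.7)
The plan is to reduce the claim to the well-known multiplicative property of approximation numbers, namely
\[
  s_n(ABC) \leq \norml{A}\, s_n(B)\, \norml{C} \quad (n \in \N),
\]
and then take the $\sup$ against the weight $\exp(an^\alpha)$.

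First I would verify this pointwise inequality directly from the definition of $s_n$ given in the Notation. Fix $n \in \N$ and let $F \in L(H_3, H_2)$ be any finite-rank operator with $\mathrm{rank}(F) < n$. The operator $AFC \in L(H_4, H_1)$ then also satisfies $\mathrm{rank}(AFC) < n$, and
\[
  \norml{ABC - AFC} = \norml{A(B-F)C} \leq \norml{A}\, \norml{B-F}\, \norml{C}.
\]
Taking the infimum over all such $F$ on both sides and using the definition of $s_n(ABC)$ on the left and $s_n(B)$ on the right yields the claimed multiplicative bound.

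Next, I multiply through by $\exp(an^\alpha)$ to obtain
\[
  s_n(ABC)\exp(an^\alpha) \leq \norml{A}\,\bigl(s_n(B)\exp(an^\alpha)\bigr)\,\norml{C},
\]
and take the supremum over $n \in \N$. By the definition of the $(a,\alpha)$-gauge, the right-hand side becomes $\norml{A}\,\abs{B}_{a,\alpha}\,\norml{C}$, which is finite by hypothesis, and the left-hand side is $\abs{ABC}_{a,\alpha}$. In particular, $ABC$ is compact (being the limit of the finite-rank operators $AF_n C$ approximating $B$) and belongs to $E(a,\alpha;H_4,H_1)$, giving the asserted ideal-like inclusion.

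There is no real obstacle here; the only thing to be careful about is that the definition of $s_n$ in the Notation is phrased via the approximation numbers in $L(H_1,H_2)$, so one must check that the intermediate operator $AFC$ is a legitimate competitor in the infimum defining $s_n(ABC)$. This is immediate since composition does not increase rank.
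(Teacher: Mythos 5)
Your proof is correct and follows exactly the paper's route: reduce everything to the multiplicative inequality $s_n(ABC)\leq \norml{A}\,s_n(B)\,\norml{C}$ and then take the supremum against the weight $\exp(an^\alpha)$. The only difference is that the paper cites this inequality from Pietsch while you prove it directly from the definition of the approximation numbers, which is a perfectly sound (and self-contained) substitute.
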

\begin{proof}  Follows from
\[ s_k(ABC)\leq\norml{A}s_k(B)\norml{C} \]
for $k\in\N$ (see \cite[2.2]{Pie}).
\end{proof}

\begin{remark}
\label{preidealrem}
  The proposition implies that 
\[ L(H,H)\,E(a,\alpha;H,H)\,L(H,H)\subset E(a,\alpha;H,H)\,. \]
Thus the classes $E(a,\alpha;H,H)$, while lacking linear structure,
satisfy part of the definition of an operator ideal. In other words,
$E(a,\alpha;H,H)$ is what is sometimes referred to as a pre-ideal (see,
for example, \cite{Nel}). 
\end{remark}

We now consider in more detail the relation between different
exponential classes under addition. We start with a general result
concerning the singular numbers of a sum of operators.  

\begin{proposition}\label{berg:prop10}
Let $A_k\in \Si(H_1,H_2)$ for $1\leq k\leq K$.  Then
\[ s_n\left(\sum_{k=1}^KA_k\right)
\leq K\sigma_n \quad (n\in\N), \] where $\sigma$ denotes the
decreasing arrangement (see the Appendix) 
of the $K$ singular number sequences
$s(A_1),\ldots,s(A_K)$.

\end{proposition}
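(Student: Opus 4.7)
The plan is to reduce the claim to the standard subadditivity inequality for singular numbers and then do a careful index-counting argument.

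First I would recall the Pietsch-Ky Fan type inequality $s_{m+n-1}(A+B)\le s_m(A)+s_n(B)$, which iterated $K-1$ times yields
\[ s_{n_1+\cdots+n_K-(K-1)}\!\left(\sum_{k=1}^K A_k\right)\le \sum_{k=1}^K s_{n_k}(A_k) \]
for every choice of positive integers $n_1,\dots,n_K$. So the goal becomes: given $n$, find indices $n_k$ with $\sum_k n_k=n+K-1$ such that $s_{n_k}(A_k)\le\sigma_n$ for every $k$; this would immediately give $s_n(\sum_k A_k)\le K\sigma_n$.

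The natural choice is to let, for each $k$, $m_k$ be the number of indices $j\in\N$ with $s_j(A_k)>\sigma_n$, and then set $n_k:=m_k+1$. Since $j\mapsto s_j(A_k)$ is decreasing, we have $s_{n_k}(A_k)=s_{m_k+1}(A_k)\le\sigma_n$ by the very definition of $m_k$. On the other hand $\sigma_n$ is the $n$-th term in the decreasing arrangement of the combined multiset $\{s_j(A_k):k=1,\dots,K,\ j\in\N\}$, so there are at most $n-1$ elements of that multiset strictly exceeding $\sigma_n$. Counting these contributions one sequence at a time gives $\sum_k m_k\le n-1$, hence $\sum_k n_k\le n+K-1$.

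If equality holds we are done; otherwise I would simply enlarge one of the $n_k$ (the monotonicity $s_{n_k+1}(A_k)\le s_{n_k}(A_k)\le\sigma_n$ preserves the bound) until the sum is exactly $n+K-1$, and then conclude via the iterated subadditivity. The only real subtlety is the bookkeeping in the index count and making sure the adjustment step does not spoil the inequality $s_{n_k}(A_k)\le\sigma_n$; both are routine once one keeps in mind that both $s_j(A_k)$ and $\sigma_j$ are decreasing. Beyond that there is no genuine obstacle, since the substantive content is already packaged in the subadditivity of singular numbers.
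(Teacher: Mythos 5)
Your argument is correct, and it is genuinely different from the one in the paper. You reduce everything to the iterated Ky Fan/Pietsch additivity $s_{m+n-1}(A+B)\le s_m(A)+s_n(B)$ together with a counting argument on the merged multiset of singular values: each $m_k$ is finite because $\sum_k m_k\le n-1$, the choice $n_k=m_k+1$ gives $s_{n_k}(A_k)\le\sigma_n$ by monotonicity, and the factor $K$ appears as the number of summands. (In fact the final padding step is unnecessary: since $\sum_k n_k-(K-1)\le n$ and $j\mapsto s_j$ is decreasing, $s_n(\sum_k A_k)\le s_{\sum_k n_k-(K-1)}(\sum_k A_k)$ already, so you can conclude directly.) The paper instead works from the Schmidt representations of the $A_k$, assembles the leading $m-1$ terms of the merged expansion into an explicit finite-rank approximant $F_{m-1}$, and bounds $\norm{A-F_{m-1}}{}$ by $K\sigma_m$ via Cauchy--Schwarz applied to each of the $K$ orthonormal systems; there the factor $K$ comes from summing $K$ unit bounds. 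Your route has the advantage of using only the additivity of approximation numbers, so it works verbatim for approximation numbers of operators between Banach spaces, whereas the paper's proof is tied to the Hilbert-space Schmidt decomposition (but yields a concrete approximant as a by-product). Both arguments produce the same constant $K$.
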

\begin{proof}
  Set $A:=\sum_{k=1}^KA_k$. The compactness of the $A_k$ means they
  have Schmidt representations
\[ A_k=\sum_{l=1}^\infty s_l(A_k)a_l^{(k)}\otimes b_l^{(k)}, \]
where $\set{a_l^{(k)}}_{l\in\N}$ and $\set{b_l^{(k)}}_{l\in\N}$ are
suitable orthonormal systems in $H_1$ and $H_2$ respectively. Here 
$a\otimes b$, where $a\in H_1$ and $b\in H_2$, denotes the rank-1
operator 
$H_1\to H_2$ given by 
\[ (a \otimes b)x=(x,a)_{H_1}\, b\,. \]

Let $\nu:\N\rightarrow \N$ and $\kappa:\N\rightarrow \N$ be functions
that effect the decreasing arrangement of the singular numbers of
the $A_k$ in the sense that
\[ \sigma_n=s_{\nu(n)}(A_{\kappa(n)}) \quad (n\in\N). \]
Then 
\[ A=\sum_{n=1}^\infty \sigma_n a_{\nu(n)}^{(\kappa(n))}\otimes
  b_{\nu(n)}^{(\kappa(n))}\,,\]
which suggests defining, for each $m\in\N_0$, the rank-$m$ operator 
$F_m:H_1\rightarrow H_2$ by
\begin{align*}
  F_0&:=0\,, \\
  F_m&:=\sum_{n=1}^m\sigma_n a_{\nu(n)}^{(\kappa(n))}\otimes
  b_{\nu(n)}^{(\kappa(n))} \quad (m\in\N).
\end{align*}
If $x\in H_1$ and $y\in H_2$ then
\begin{align*}
  \abs{ ((A-F_{m-1})x,y)_{H_2}}& \leq
  \sum_{n=m}^\infty\sigma_n\abs{(x,a_{\nu(n)}^{(\kappa(n))})_{H_1}
    (b_{\nu(n)}^{(\kappa(n))},y)_{H_2}}\\
&\leq \sigma_m \sum_{n=1}^\infty\abs{(x,a_{\nu(n)}^{(\kappa(n))})_{H_1}%
  (b_{\nu(n)}^{(\kappa(n))},y)_{H_2}}\\
&=\sigma_m\sum_{k=1}^K\sum_{l=1}^\infty\abs{(x,a_{l}^{(k)})_{H_1}%
  (b_{l}^{(k)},y)_{H_2}}\\
&\leq \sigma_m\sum_{k=1}^K 
  \sqrt{ \sum_{l=1}^\infty \abs{(x,a_l^{(k)})_{H_1}}^2}%
  \sqrt{ \sum_{l=1}^\infty \abs{(b_l^{(k)},y)_{H_2}}^2}\\
&\leq \sigma_m \sum_{k=1}^K\norm{x}{H_1}\norm{y}{H_2}\\
&= \sigma_m K\norm{x}{H_1} \norm{y}{H_2}.
\end{align*}
This estimate justifies the rearrangements (since the series are
absolutely convergent) and also yields $\norm{A-F_{m-1}}{}\leq
K\sigma_m$, from which the assertion follows.
\end{proof}

\begin{proposition}
\label{propaddition}
 Suppose that $A_n\in E(a_n,\alpha;H_1,H_2)$ for $1\leq n\leq
  K$. Let 
  $A:=\sum_{n=1}^KA_n$ and $a':=(\sum_{n=1}^Ka_n^{-1/\alpha})^{-\alpha}$. Then 
\begin{itemize}
\item[(i)] $A \in E(a',\alpha)\text{ with } |A|_{a',\alpha}\leq
  K\max_{1\leq n\leq K}|A_n|_{a_n,\alpha}$. In particular 
\[ E(a_1,\alpha)+\cdots + E(a_K,\alpha)\subset E(a',\alpha)\,. \]
\item[(ii)] If both $H_1$ and $H_2$ are infinite-dimensional then
    the 
inclusion above is sharp in the sense that 
\[ E(a_1,\alpha)+\cdots + E(a_K,\alpha)\not\subset E(b,\alpha)\,, \]
whenever $b>a'$. 
\end{itemize}
\end{proposition}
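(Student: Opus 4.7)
For part (i), the plan is to apply Proposition~\ref{berg:prop10} to reduce the problem to bounding the decreasing arrangement $\sigma$ of the $K$ singular-number sequences $s(A_1),\ldots,s(A_K)$. Set $M:=\max_{1\le n\le K}|A_n|_{a_n,\alpha}$, so that $s_m(A_n)\le M\exp(-a_n m^\alpha)$ for every $n,m$. To estimate $\sigma_n$ from above, I would use a counting (distribution function) argument: for a threshold $t>0$, the number of pairs $(n,m)$ with $M\exp(-a_n m^\alpha)\ge t$ is at most $\sum_{n=1}^K (a_n^{-1}\log(M/t))^{1/\alpha}=(\log(M/t))^{1/\alpha}(a')^{-1/\alpha}$. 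Choosing $t=M\exp(-a'n^\alpha)$ makes this count equal to $n$, so $\sigma_n\le M\exp(-a'n^\alpha)$. Combined with Proposition~\ref{berg:prop10}, this gives $s_n(A)\le KM\exp(-a'n^\alpha)$, as required.

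For part (ii), the plan is to exhibit, for each $b>a'$, a sum $A_1+\cdots+A_K$ with $A_n\in E(a_n,\alpha)$ but $A\notin E(b,\alpha)$. Using the infinite-dimensionality, I would pick $K$ pairwise orthogonal infinite orthonormal systems $\{e_j^{(n)}\}_j\subset H_1$ and $\{f_j^{(n)}\}_j\subset H_2$ and set
\[ A_n:=\sum_{j=1}^\infty \exp(-a_n j^\alpha)\,e_j^{(n)}\otimes f_j^{(n)}. \]
Then $s_j(A_n)=\exp(-a_n j^\alpha)$, so $A_n\in E(a_n,\alpha)$ with gauge $1$, and because the $A_n$ have pairwise orthogonal initial and final subspaces, the singular numbers of $A:=\sum_n A_n$ are precisely the decreasing arrangement of the multiset $\{\exp(-a_n j^\alpha):1\le n\le K,\,j\in\N\}$.

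The key step is then a lower bound on $s_n(A)$ via the same counting idea run backwards. For $s>0$, the number of pairs $(n,j)$ with $\exp(-a_n j^\alpha)>\exp(-s)$ is at least $\sum_{n=1}^K((s/a_n)^{1/\alpha}-1)=s^{1/\alpha}(a')^{-1/\alpha}-K$, so taking $s=a'(n+K)^\alpha$ shows there are at least $n$ singular values of $A$ strictly exceeding $\exp(-a'(n+K)^\alpha)$; hence $s_n(A)\ge\exp(-a'(n+K)^\alpha)$. For any $b>a'$, a direct estimate of $(n+K)^\alpha-n^\alpha$ (bounded when $\alpha\le 1$, of order $n^{\alpha-1}$ when $\alpha>1$) shows $s_n(A)\exp(bn^\alpha)\to\infty$, so $A\notin E(b,\alpha)$.

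The main obstacle is the sharpness of the counting estimate used in both parts; the upper bound in (i) and the lower bound in (ii) must match up to constants, which is exactly why the exponent $a'=(\sum a_n^{-1/\alpha})^{-\alpha}$ appears. Once the distribution-function argument is set up carefully, the $K$-versus-$\infty$ discrepancy in (ii) disappears in the limit, giving the sharp threshold $a'$.
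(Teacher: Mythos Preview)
Your approach is correct and matches the paper's: Proposition~\ref{berg:prop10} together with a counting (distribution-function) bound on the decreasing arrangement for (i), and an explicit orthogonal-blocks construction together with the reverse count for (ii). The paper packages the two counting bounds separately as Corollary~\ref{app:arrangementcoro}, whose proof is exactly the argument you sketch; your construction in (ii) via $K$ pairwise orthogonal orthonormal systems is a cosmetic variant of the paper's, which uses residue classes modulo $K$ within a single orthonormal basis.

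One point to tighten in (i): from ``the number of values $\ge t$ is at most $n$'' you infer $\sigma_n\le t$, but this only gives $\sigma_{n+1}\le t$ (consider $\sigma_1=\cdots=\sigma_n=2t$ and all later terms $0$). For $\alpha>1$ this off-by-one would actually spoil the gauge bound $|A|_{a',\alpha}\le K\max_n|A_n|_{a_n,\alpha}$, since $n^\alpha-(n-1)^\alpha$ is unbounded. The clean fix is to run the inequality the other way, as the paper does in the Appendix: since $\sigma_1,\ldots,\sigma_n\ge\sigma_n$, the count at threshold $t=\sigma_n$ is at least $n$, whence
\[
n\le \bigl(\log(M/\sigma_n)\bigr)^{1/\alpha}(a')^{-1/\alpha},
\]
and therefore $\sigma_n\le M\exp(-a'n^\alpha)$ directly.
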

\begin{proof}
Assertion (i) follows from Proposition~\ref{berg:prop10} and 
Corollary~\ref{app:arrangementcoro} (i), which gives 
an upper bound on the rate of decay of the decreasing arrangement of the 
sequences $s(A_1),\ldots,s(A_K)$. 

For the proof of (ii) define $K$ sequences
$s^{(1)},\ldots,s^{(K)}$ by 
\[ s_n^{(k)}:=\exp(-a_kn^\alpha)\quad (n\in\N)\,.\] 
It turns out that it suffices to exhibit $K$ 
compact operators $A_{k}$ with $s_n(A_k)=s_n^{(k)}$ such that 
$s(A)$ is the
decreasing arrangement of the sequences $s^{(1)},\ldots,s^{(K)}$. To
see this, note that then $A_k\in E(a_k,\alpha)$ for
$k\in\set{1,\cdots,K}$. At the same time  
$s_n(A)\geq
\exp(-a'(n+K)^\alpha)$ by Corollary~\ref{app:arrangementcoro} (ii), so
that $A\not\in E(b,\alpha)$ whenever $b>a'$. 

In order to construct these operators we proceed as follows. Since
each $H_i$ 
was assumed to be infinite-dimensional we can choose an orthonormal basis 
$\set{h_n^{(i)}}_{n\in\N}$ for each of them. For each $k=1,\ldots,K$,
we now define a compact operator $A_k:H_1\to H_2$ by  
\[ A_kh_n^{(1)}:=
\begin{cases}
  s_{(n+(k-1))/K}^{(k)}h_n^{(2)} & \text{ for $n\in K\N-(k-1)$},\\
  0 & \text{ for $n\not\in K\N-(k-1)$}.
\end{cases} \]
It is not difficult to see that $s_n(A_k)=s_n^{(k)}$. 
Moreover, it is easily verified that the singular numbers of $A$ are 
precisely the numbers of the form
$s_n^{(k)}$ with $n\in\N$ and $k=1,\ldots,K$. 
Thus, $s(A)$ is the decreasing arrangement of the sequences
$s^{(1)},\ldots,s^{(K)}$ as required. 

\end{proof}

\begin{remark}\label{rem2}
 The proposition implies that $E(a,\alpha)+E(a,\alpha)\subset
 E(2^{-\alpha}a,\alpha)$, but $E(a,\alpha)+E(a,\alpha)\not\subset
 E(a,\alpha)$, because $2^{-\alpha}a<a$. In particular, $E(a,\alpha)$
 is not a linear space.  
\end{remark}

The following result 
establishes a sharp bound on the eigenvalue decay rate
in each exponential class.

\begin{proposition}
\label{prop:weylest}
 Let $a,\alpha>0$ and $A\in E(a,\alpha;H,H)$. Then
\[ \lam(A)\in {\cal E}(a/(1+\alpha),\alpha) \text{ with }
|\lam(A)|_{a/(1+\alpha),\alpha}\leq |A|_{a,\alpha}\,.\]  
If $H$ is infinite-dimensional, 
the result is sharp in the sense that there is an operator $A\in
E(a,\alpha; H,H)$ such that $\lam(A)\not\in {\cal E}(b,\alpha)$ whenever
$b>a/(1+\alpha)$. 
\end{proposition}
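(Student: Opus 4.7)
The plan is to deduce the upper bound from Weyl's multiplicative inequality for compact operators,
\[
\prod_{k=1}^n |\lam_k(A)| \le \prod_{k=1}^n s_k(A) \qquad (n\in\N),
\]
and to establish sharpness by an explicit construction. Since the sequence $(|\lam_n(A)|)$ is decreasing, the left-hand side is bounded below by $|\lam_n(A)|^n$. Substituting $s_k(A)\le |A|_{a,\alpha}\exp(-ak^\alpha)$ on the right and taking $n$-th roots yields
\[
|\lam_n(A)| \le |A|_{a,\alpha}\exp\!\left(-\frac{a}{n}\sum_{k=1}^n k^\alpha\right).
\]
Because $x\mapsto x^\alpha$ is increasing on $[0,\infty)$, the Riemann-sum estimate
\[
\sum_{k=1}^n k^\alpha \ge \int_0^n x^\alpha\,dx = \frac{n^{\alpha+1}}{\alpha+1}
\]
is immediate, and inserting it gives the claimed bound $|\lam_n(A)| \le |A|_{a,\alpha}\exp(-(a/(1+\alpha))n^\alpha)$, i.e.\ $|\lam(A)|_{a/(1+\alpha),\alpha}\le |A|_{a,\alpha}$.

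For sharpness, I would exhibit a single compact operator $A$ on $H$ with $s_n(A)=O(\exp(-an^\alpha))$ whose eigenvalues decay no faster than the extremal rate $\exp(-(a/(1+\alpha))n^\alpha)$. The natural idea is to fix an orthonormal basis $\set{e_n}$ of $H$ and build $A$ as a triangular operator with diagonal entries $d_n := \exp(-(a/(1+\alpha))n^\alpha)$; triangularity with respect to this basis forces the nonzero eigenvalues of $A$ to be exactly the $d_n$, so the eigenvalue sequence already saturates the desired rate. The task is then to choose the strictly upper-triangular part large enough that the singular numbers of $A$ decay only like $\exp(-an^\alpha)$, yet small enough that $A$ remains compact. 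Since Weyl's inequality is tight for the pair $|\lam_k|=d_k$ and $s_k \sim \exp(-ak^\alpha)$, the target singular value profile is at least consistent, and a concrete realisation can be obtained by taking $A$ as an infinite direct sum of finite-dimensional upper triangular ``Jordan-like'' blocks of increasing size, for which the singular values can be computed (or estimated) explicitly.

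The routine half of the argument is the upper bound, where the only nontrivial input is Weyl's inequality, and the calculation reduces to the Riemann-sum estimate above. The main obstacle is the sharpness part: one has to verify that the explicit triangular construction really produces singular numbers decaying at rate $\exp(-an^\alpha)$, which requires a careful analysis of the singular values of the finite blocks and a bookkeeping argument to arrange them in the correct order across the direct sum. Once this is done, the operator $A$ lies in $E(a,\alpha;H,H)$ but has $|\lam_n(A)|=d_n$, so $\lam(A)\notin {\cal E}(b,\alpha)$ for any $b>a/(1+\alpha)$, completing the proof.
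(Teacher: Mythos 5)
Your upper bound argument is exactly the paper's: Weyl's multiplicative inequality, the observation $|\lam_n(A)|^n\le\prod_{k=1}^n s_k(A)$, and the Riemann-sum estimate $\sum_{k=1}^n k^\alpha\ge n^{1+\alpha}/(1+\alpha)$. That half is complete. The sharpness construction, however, contains a genuine error: the operator you are trying to build does not exist. You prescribe $|\lam_n(A)|=d_n=\exp(-\tfrac{a}{1+\alpha}n^\alpha)$ for \emph{every} $n$ together with $s_n(A)\le C\exp(-an^\alpha)$, and you assert that ``Weyl's inequality is tight for the pair.'' It is in fact violated. Writing $S_n:=\sum_{k=1}^n k^\alpha\sim n^{1+\alpha}/(1+\alpha)$, Weyl requires
\[
\exp\Bigl(-\tfrac{a}{1+\alpha}S_n\Bigr)=\prod_{k=1}^n|\lam_k(A)|\le\prod_{k=1}^n s_k(A)\le C^n\exp(-aS_n)\,,
\]
i.e.\ $\exp\bigl(\tfrac{a\alpha}{1+\alpha}S_n\bigr)\le C^n$, which fails for large $n$ since $S_n$ grows like $n^{1+\alpha}$. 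The pointwise bound $|\lam_n(A)|\le|A|_{a,\alpha}\exp(-\tfrac{a}{1+\alpha}n^\alpha)$ can therefore never be saturated at all indices simultaneously; the cumulative product constraint forbids it, and no choice of off-diagonal part can rescue a triangular operator whose diagonal is the full sequence $(d_n)$.

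The statement to be proved only requires $\lam(A)\notin{\cal E}(b,\alpha)$ for $b>a/(1+\alpha)$, so it suffices to touch the extremal envelope along a sparse subsequence, and this is what the paper does. Let $C(\tau_1,\ldots,\tau_N)$ be the weighted cyclic matrix with $\tau_1,\ldots,\tau_{N-1}$ on the superdiagonal and $\tau_N$ in the corner: its singular values are the $\tau_k$ and \emph{all} its eigenvalues have modulus $(\tau_1\cdots\tau_N)^{1/N}$, so Weyl holds with equality at $n=N$. Taking a block-diagonal sum of such blocks over index ranges $(N_{n-1},N_n]$ with $N_n$ super-exponentially increasing (e.g.\ $N_n=\exp(n^2)$) and $\tau_k=\exp(-ak^\alpha)$ gives $s_k(A)=\exp(-ak^\alpha)$, while the eigenvalue moduli are constant on each block, equal to $\exp(-ap_n^\alpha)$ with $p_n^\alpha\sim N_n^\alpha/(1+\alpha)$. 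At the right end of each block, $k=N_n$, this is essentially $\exp(-\tfrac{a}{1+\alpha}N_n^\alpha)$, whence $|\lam_{N_n}(A)|\exp(bN_n^\alpha)\to\infty$ for every $b>a/(1+\alpha)$; at the left end the eigenvalues dip far below the envelope $\exp(-\tfrac{a}{1+\alpha}k^\alpha)$, and that dip is precisely what reconciles the example with Weyl's inequality. You should replace your construction with one of this type.
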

\begin{proof}
If $A\in E(a,\alpha)$ then $s_k(A)\leq
  \abs{A}_{a,\alpha} \exp(-ak^\alpha)$.  Using the
  multiplicative Weyl inequality \cite[3.5.1]{Pie} we have
\begin{multline}
\label{multiplicativeweyl}
 \abs{\lam_k(A)}^k\leq\prod_{l=1}^k\abs{\lam_l(A)}\leq
  \prod_{l=1}^ks_l(A)
\leq\prod_{l=1}^k\abs{A}_{a,\alpha} \exp(-al^\alpha)=\\
=\abs{A}_{a,\alpha}^k\exp(-a\sum_{l=1}^k l^\alpha).
\end{multline}

But $\sum_{l=1}^k l^\alpha\geq
\int_0^kx^{\alpha}\,dx=\frac{1}{1+\alpha}k^{\alpha+1}$, which combined
with 
(\ref{multiplicativeweyl}) yields 

\[ |\lam_k(A)|\leq|A|_{a,\alpha}\exp( -ak^\alpha/(1+\alpha)). \]

Sharpness is proved in several steps. We start with the
following
observation. Let $\tau_1\geq \ldots\geq \tau_N\geq 0$ be positive real
numbers. Consider the matrix $C(\tau_1,\ldots,\tau_N)\in L(\C^N,\C^N)$ given
by
\[  C(\tau_1,\ldots,\tau_N):=\left (
\begin{matrix}
0 & \tau_1 & \ldots & 0 & 0 \\
0 & 0 & \ldots & 0 & 0 \\
\vdots & \vdots & & \vdots & \vdots\\
0 & 0 & \ldots & 0 & \tau_{N-1} \\
\tau_{N} & 0 & \ldots & 0 & 0 
\end{matrix}
\right ).
\]

It easy to see that 
\[ s_n(C(\tau_1,\ldots,\tau_N))=\tau_n\]
and 
\[
|\lam_1(C(\tau_1,\ldots,\tau_N))|=\cdots=|
\lam_N(C(\tau_1,\ldots,\tau_N))|=(\tau_1\cdots\tau_N)^{1/N}.
 \]
 
The desired operator is constructed as follows. Fix $a>0$ and
$\alpha>0$. Next choose a
super-exponentially increasing sequence $N_n$, that is, $N_n$ is
increasing and 
$\lim_{n\rightarrow\infty}N_{n-1}/N_n=0$. For definiteness we could
set $N_n=\exp(n^2)$. 

 Put $N_0=0$ and define 
\[d_n:=N_n-N_{n-1} \quad (n\in\N). \]
Define matrices $A_n\in L(\C^{d_n},\C^{d_n})$ by 
\[ A_n=C( \exp(-a(N_{n-1}+1)^\alpha), \ldots,
\exp(-a(N_{n})^\alpha)). \]
Then
\[ s_k(A_n)=\exp( -a(N_{n-1}+k)^\alpha) \quad (1\leq k\leq d_n) \]
and 
\[ |\lam_k(A_n)| =\exp( -ap_n^\alpha) \quad (1\leq k\leq d_n),  \]
where 
\[ p_n:=\frac{1}{d_n}\sum_{l=N_{n-1}+1}^{N_n}l^\alpha. \]

Put 
\[ H:=\bigoplus_{n=1}^\infty \C^{d_n}, \]
and let $A:H\rightarrow H$ be the block-diagonal operator 
\[ (Ax)_n=A_nx_n. \]
Clearly, the singular numbers of $A$ are given by
$s_k(A)=\exp(-ak^\alpha)$ and the 
moduli of the eigenvalues are the numbers $\exp(-ap_n^\alpha)$ occurring with
multiplicity $d_n$. 

Before checking that $A$ has the desired properties we observe that 
\begin{multline}
p_n^\alpha=  \frac{1}{d_n}\sum_{l=N_{n-1}+1}^{N_n}l^\alpha\leq
\frac{1}{d_n}\int_{N_{n-1}+1}^{N_n+1}x^\alpha=\\
=\frac{1}{\alpha+1}\frac{1}{d_n}(
(N_n+1)^{\alpha+1}-(N_{n-1}+1)^{\alpha+1})=
\frac{1}{\alpha+1}N_n^\alpha\delta_n, 
\end{multline}
with $\lim_{n\rightarrow\infty}\delta_n=1$. The latter follows from
the fact that the sequence
$N_n$ was chosen to be super-exponentially increasing. 

Suppose now that $b>a/(\alpha+1)$. Since
$|\lam_{N_n}(A)|=\exp(-ap_n^\alpha)$ we have
\[ |\lam_{N_n}(A)|\exp(bN_n^\alpha)\geq
\exp(-\frac{a}{\alpha+1}N_n^\alpha \delta_n+bN_n^\alpha)=\exp(
N_n^\alpha(b-\frac{a}{\alpha+1}\delta_n)). \]
Thus 
\[ |\lam_{N_n}(A)|\exp(bN_n^\alpha)\rightarrow +\infty \text{ as
}n\rightarrow \infty,\] 
which means that $\lam(A)\not\in {\cal E}(b,\alpha)$.
\end{proof}

\begin{remark}
 Similar results have been obtained by K\"onig and Richter
 \cite[Proposition 1]{KR},
though without estimates on the gauge of $\lambda(A)$.  
\end{remark}

\section{Resolvent estimates}
\label{section:resest}
In this section we shall derive an upper bound for the norm of the resolvent
$(zI-A)^{-1}$ of $A\in E(a,\alpha)$ in terms of the distance of $z$ to
the spectrum of $A$ and the departure from normality of $A$, a 
number quantifying the non-normality of $A$. We shall
employ a technique originally due to Henrici \cite{henrici}, who used
it in a finite-dimensional context. The basic idea is to write $A$ as
a perturbation of a normal operator having the same spectrum as $A$
by a quasi-nilpotent operator. A similar argument can be used to
derive resolvent estimates for operators belonging to Schatten classes
(see \cite{Gil} (and references therein) and
\cite{Ban}). 

Following the idea outlined above we start with bounds for \qn\ operators. 

\begin{proposition}\label{prop3}
Let $a,\alpha>0$. 
\begin{itemize}
\item[(i)] If $A\in E(a,\alpha;H,H)$ 
is \qn, that is, $\sigma(A)=\set{0}$, then 
\begin{equation}\label{qn:est}
\norm{(I-A)^{-1}}{}\leq f_{a,\alpha}(|A|_{a,\alpha}), 
\end{equation}
where $f_{a,\alpha}:\R_0^+\to\R_0^+$ is defined by
\[ f_{a,\alpha}(r)=\prod_{n=1}^\infty(1+r\exp(-an^{\alpha})). \]
Moreover, $f_{a,\alpha}$ has the following asymptoticss:
\begin{equation}\label{prop3:eq3}
 \log f_{a,\alpha}(r) \sim
a^{-1/\alpha}\frac{\alpha}{1+\alpha}(\log r)^{1+1/\alpha} 
\text{ as $r\to\infty$}. 
\end{equation} 
\item[(ii)] If $H$ is infinite-dimensional the 
estimate (\ref{qn:est}) is sharp in the sense that
  there is a \qn\ $B\in E(a,\alpha;H,H)$ such that 
\begin{equation}\label{prop3:sharpest}
\log\norm{(I-zB)^{-1}}{}\sim \log
  f_{a,\alpha}(|zB|_{a,\alpha})\text{ as $|z|\to\infty$}. 
\end{equation}
\end{itemize}
\end{proposition}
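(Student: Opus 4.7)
The plan for (i) is to combine two ingredients: a classical operator-norm bound for the resolvent of a quasi-nilpotent compact operator, and the exponential singular-value decay guaranteed by $A\in E(a,\alpha)$. Specifically, I would first recall (or re-derive along the lines of \cite{Ban}) the inequality
\[ \|(I-A)^{-1}\|\leq\prod_{n=1}^\infty\bigl(1+s_n(A)\bigr) \]
valid for every quasi-nilpotent compact $A$. The standard argument uses the Schur triangular representation: because $A$ is compact and quasi-nilpotent, there is an orthonormal basis in which $A$ is represented by a strictly upper-triangular (block) matrix; an induction on finite truncations, using a block-matrix decomposition of $(I-A_N)^{-1}$ together with the multiplicative Horn inequality for singular values, yields the product bound, and a passage to the limit (permitted since the product converges by exponential decay of $s_n(A)$) closes the argument. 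Substituting $s_n(A)\leq|A|_{a,\alpha}\exp(-an^\alpha)$ then gives \eqref{qn:est} directly from the definition of $f_{a,\alpha}$.

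For the asymptotic \eqref{prop3:eq3}, the plan is a standard Laplace-type analysis of
\[ \log f_{a,\alpha}(r)=\sum_{n=1}^\infty\log\bigl(1+r\exp(-an^\alpha)\bigr). \]
Set $n_0(r):=(a^{-1}\log r)^{1/\alpha}$, the transition index at which $r\exp(-an^\alpha)=1$. For $n\leq n_0$ the summand is close to $\log r-an^\alpha$; for $n>n_0$ it is bounded by $r\exp(-an^\alpha)$, whose tail sum is of lower order. Approximating the head by the integral $\int_0^{n_0(r)}(\log r-ax^\alpha)\,dx$ gives
\[ n_0(r)\log r-\frac{a\,n_0(r)^{1+\alpha}}{1+\alpha}=a^{-1/\alpha}\frac{\alpha}{1+\alpha}(\log r)^{1+1/\alpha}, \]
which is exactly \eqref{prop3:eq3}. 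Euler--Maclaurin corrections and the controlled tail furnish the $o((\log r)^{1+1/\alpha})$ error.

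For sharpness (ii), the plan is to exhibit a weighted unilateral shift. Fix an orthonormal basis $\{e_n\}_{n\in\N}$ of $H$ and define $B\in L(H,H)$ by $Be_n:=\exp(-an^\alpha)e_{n+1}$. A direct computation shows $s_n(B)=\exp(-an^\alpha)$, so $B\in E(a,\alpha)$ with $|B|_{a,\alpha}=1$; $B$ is compact, and since $\|B^k\|^{1/k}=\exp(-a\sum_{j=1}^kj^\alpha/k)\to 0$, it is quasi-nilpotent. Iterating, $B^ke_1=\exp(-a\sum_{j=1}^kj^\alpha)e_{k+1}$, so the vectors $\{B^ke_1\}_{k\geq 0}$ are mutually orthogonal, and
\[ \|(I-zB)^{-1}\|\geq\|(I-zB)^{-1}e_1\|\geq\max_{k\in\N}|z|^k\exp\!\Bigl(-a\sum_{j=1}^kj^\alpha\Bigr). \]
Optimizing over $k$ at the critical index $k^{\ast}\sim(a^{-1}\log|z|)^{1/\alpha}$ and using the precise asymptotic $\sum_{j=1}^{k}j^\alpha=k^{1+\alpha}/(1+\alpha)+O(k^\alpha)$ yields
\[ \log\|(I-zB)^{-1}\|\geq a^{-1/\alpha}\frac{\alpha}{1+\alpha}(\log|z|)^{1+1/\alpha}\bigl(1+o(1)\bigr). \]
Combining this lower bound with the upper bound from (i), applied with $r=|zB|_{a,\alpha}=|z|$, together with the asymptotic \eqref{prop3:eq3}, yields \eqref{prop3:sharpest}.

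The main obstacle is the precise matching of the leading constant $a^{-1/\alpha}\alpha/(1+\alpha)$ between the upper and lower bounds in part (ii): this requires tight, rather than coarse, control of the sums $\sum_{j=1}^{k}j^\alpha$ near the optimizing index $k^{\ast}$, and careful handling of the transition region in the Laplace-type analysis for $\log f_{a,\alpha}$. The product bound in (i) and the rough asymptotic are comparatively routine.
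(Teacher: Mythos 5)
Your proposal is correct and follows essentially the same route as the paper: part (i) rests on the same product bound $\norm{(I-A)^{-1}}{}\leq\prod_{n=1}^\infty(1+s_n(A))$ for quasi-nilpotent trace-class operators, and part (ii) uses the identical weighted shift $Bh_n=\exp(-an^\alpha)h_{n+1}$ with the lower bound extracted from $\norm{(I-zB)^{-1}h_1}{}$. The only (harmless) divergence is in how the asymptotics are computed: the paper obtains \eqref{prop3:eq3} from the entire-function zero-counting estimates $N(r)\leq\log f_{a,\alpha}(r)\leq N(r)+Q(r)$ of Boas, and bounds $\norm{(I-zB)^{-1}}{}$ from below by the full series and then invokes the P\'olya--Szeg\H{o} comparison $\log\mu(r)\sim\log g(r)$ with the maximum term, whereas you use a direct Laplace-type splitting of the sum and take the maximum term of the positive series outright as a lower bound --- both of which are valid and, if anything, more self-contained.
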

\begin{proof} Fix $a,\alpha>0$. 
 
(i) Since $A$ is trace class
  and \qn\ a standard estimate (see, for example, 
\cite[Chapter X, Theorem 1.1]{GGK}) shows that 
\[ \norm{(I-A)^{-1}}{}\leq \prod_{n=1}^\infty (1+s_n(A))\,.\]
Thus
\[  \norm{(I-A)^{-1}}{}\leq
  \prod_{n=1}^\infty (1+|A|_{a,\alpha}\exp(-an^\alpha))=f_{a,\alpha}(|A|_{a,\alpha}). \]
It remains to prove the growth estimate (\ref{prop3:eq3}). We proceed
by noting that $f_{a,\alpha}$ extends to an entire function of genus
zero with $f_{a,\alpha}(0)=1$. 
Moreover, the maximum modulus of $f_{a,\alpha}(z)$ for $|z|=r$ equals
$f_{a,\alpha}(r)$. 
The growth of $f_{a,\alpha}$ can thus be estimated 
by
\begin{equation}
\label{boasjensen} 
N(r) \leq \log f_{a,\alpha}(r)\leq N(r)+Q(r)\,, 
\end{equation} 
where $N(r)=\int_0^rt^{-1}n(t)\,dt$, $Q(r)=r\int_r^\infty
t^{-2}n(t)\,dt$ and $n(r)$ denotes the number of zeros of
$f_{a,\alpha}$ lying in the closed disk with radius $r$ centred at 0
(see \cite[p.~47]{Boa}).  

Since $n(r)=\lfloor a^{-1/\alpha}(\log_+r)^{1/\alpha}\rfloor$, where
$\log_+(r)=\max\set{0,\log r}$ and 
$\lfloor\cdot \rfloor$ denotes the floor-function, 
we have, for $r\geq 1$,  
\begin{align}
N(r)& =a^{-1/\alpha}\int_1^rt^{-1}(\log t)^{1/\alpha}\,dt + O(\log
r)\nonumber \\
& =a^{-1/\alpha}\frac{\alpha}{1+\alpha}(\log r)^{1+1/\alpha} + O(\log
r)\,;\label{prop3:eq:nest}
\end{align}
while $Q$ satisfies 
\begin{equation}\label{prop3:eq:qest}
 Q(r)= O((\log r)^{1/\alpha}) \text{ as
  $r\to\infty$}\,. 
\end{equation}
To see this, note that 
\begin{equation}
\label{qestimate}
Q(r)\leq a^{-1/\alpha}r\int_r^\infty t^{-2}(\log
t)^{1/\alpha}\,dt=a^{-1/\alpha}r\int_{\log r}^\infty e^{-u}u^{1/\alpha}\,du\,;
\end{equation}
putting $r=e^s$ it thus suffices to show that 
\[ e^s\int_s^\infty e^{-u}u^{1/\alpha}\,du=O(s^{1/\alpha})\text{ as
  $s\to\infty$}. \]
This, however,
is the case since
\[
s^{-1/\alpha}e^s\int_s^\infty
e^{-u}u^{1/\alpha}\,du=\int_s^\infty
e^{-(u-s)}(u/s)^{1/\alpha}\,du=\int_0^\infty
e^{-t}(1+t/s)^{1/\alpha}\,dt\to 1\] 
as $s\to\infty$.
Combining (\ref{prop3:eq:qest}), (\ref{prop3:eq:nest}) and  
(\ref{boasjensen})
the growth
estimate (\ref{prop3:eq3}) follows. 

(ii) Since $H$ is infinite-dimensional, we may choose an orthonormal
basis $\set{h_n}_{n\in\N}$. Define
the operator $B\in L(H,H)$ by 
\[ Bh_n:=\exp(-an^\alpha)h_{n+1} \quad (n\in\N). \]
It is not difficult to see that $s_n(B)=\exp(-an^{\alpha})$ for
$n\in\N$, so that $B\in E(a,\alpha;H,H)$. Before we proceed let 
\[ c_n:=\sum_{k=1}^nk^\alpha \quad (n\in\N_0), \] 
and note that since $\int_0^nx^\alpha\,dx\leq \sum_{k=1}^nk^\alpha
\leq \int_1^{n+1}x^\alpha\,dx$, we have
\[ \frac{1}{\alpha+1}n^{\alpha+1}\leq c_n\leq
\frac{1}{\alpha+1}(n+1)^{\alpha+1} \quad (n\in\N_0). \] 
The operator $B$ is \qn, since 
\[ \norm{B^n}{}=\exp(-ac_n)\leq \exp(
-\frac{a}{\alpha+1}n^{\alpha+1})\,,\]
which implies $\norm{B^n}{}^{1/n}\to 0$ as $n\to\infty$.  

It order to determine the asymptotics of $\log
\norm{(I-zB)^{-1}}{}$ we start by noting that 
\begin{multline} \label{prop3:eq10}
\norm{(I-zB)^{-1}}{}^2\geq
\norm{(I-zB)^{-1}h_1}{}^2=\|\sum_{n=0}^\infty
  (zB)^nh_1\|^2\\ =\sum_{n=0}^\infty |z|^{2n}\exp(-2ac_n)\geq
  \sum_{n=0}^\infty
  |z|^{2n}\exp(-2\frac{a}{\alpha+1}(n+1)^{\alpha+1})\geq |z|^{-2} g(|z|),
\end{multline}
where 
\[ g(r):=\sum_{n=1}^\infty
r^{2n}\exp(-2\frac{a}{\alpha+1}n^{\alpha+1}) \quad (r\in\R_0^+). \]
Thus
\begin{equation}\label{prop3:chain}
2\log f_{a,\alpha}(|zB|_{a,\alpha})\geq 2\log
\norm{(I-zB)^{-1}}{}\geq -2\log |z| +\log g(|z|)\,, 
\end{equation}
which shows that in order to obtain the desired asymptotics
(\ref{prop3:sharpest}) 
it suffices to prove that 
\begin{equation}\label{prop3:eq101}
\log g(r) \sim 2 a^{-1/\alpha}\frac{\alpha}{\alpha+1}(\log
r)^{1+1/\alpha}\text{ as $r\to\infty$}.
\end{equation} 
In order to establish the asymptotics above we introduce the maximum term 
\begin{equation}
\label{maximumtermdef}
\mu(r):=\max_{1\leq n<\infty}r^{2n}\exp(
-2\frac{a}{\alpha+1}n^{\alpha+1}) \quad (r\in\R_0^+)\,.
\end{equation}
Since $g$ extends to an entire function of finite order 
we have (see, for example, \cite[Problem 54]{PS})
\[ \log \mu(r)\sim \log g(r)\text{ as $r\to\infty$}\,,\] 
which implies that it now suffices to show that $\mu$ has the desired
asymptotics 
\begin{equation}\label{prop3:muasymp}
\log \mu(r)\sim 2 a^{-1/\alpha}\frac{\alpha}{\alpha+1}(\log
r)^{1+1/\alpha}\text{ as $r\to\infty$}.
\end{equation} 
We now estimate $\mu(r)$ for fixed $r$. 
Define the function $m_r:\R_0^+\to\R_0^+$ by 
\[ m_r(x)=\exp( -2\frac{a}{\alpha+1}x^{\alpha+1}+2x\log r). \]
It turns out that $m_r$ has a maximum 
at $x_r=a^{-1/\alpha}(\log r)^{1/\alpha}$ and that $m_r$ is
monotonically increasing on $(0,x_r)$ and monotonically decreasing on
$(x_r,\infty)$. Thus 
\begin{equation}
\label{maxtermest}
\log m_r(x_r-1)\leq \log \mu(r)\leq \log m_r(x_r)\,.
\end{equation}
Write $x_r-1=\delta_r x_r$ and note that $\delta_r\to 1$ as
$r\to\infty$. Observing that 
\[ \frac{\log m_r(x_r)}{(\log r)^{1+1/\alpha}}=
   2a^{-1/\alpha}\frac{\alpha}{\alpha+1}\]
while 
\[ \frac{\log m_r(x_r-1)}{(\log r)^{1+1/\alpha}}=
\frac{\log m_r(\delta_rx_r)}{(\log r)^{1+1/\alpha}}=
 2a^{-1/\alpha}\left ( -\frac{\delta_r^{\alpha+1}}{\alpha+1}+\delta_r
 \right )
  \to 2 a^{-1/\alpha}\frac{\alpha}{\alpha+1}\]
as $r\to\infty$  
we conclude, using (\ref{maxtermest}), that (\ref{prop3:muasymp})
 holds. This implies (\ref{prop3:eq101}), which yields 
(\ref{prop3:sharpest}), as required. 
\end{proof}
\begin{rembreak}
\label{rem:improvement}
\hspace*{\parindent}
(i) The bound for the growth of the resolvent of a \qn\ 
$A\in E(a,\alpha)$ given in the above proposition is an
improvement compared to those obtainable from the usual   
estimates for operators in the Schatten classes. Indeed, if $A$
belongs to the Schatten $p$-class (i.e.~$s(A)$ is $p$-summable) 
for some $p>0$, then 
\[ \norm{(I-A)^{-1}}{}\leq f_p(\norm{A}{p}) \]
where $\norm{A}{p}$ denotes the Schatten $p$-(quasi) norm of
$A$. Here, $f_p:\R^+_0\to \R_0^+$ is given by 
\[ f_p(r)=\exp(a_pr^p +b_p)\,,\]
where
$a_p$ and $b_p$ are positive numbers depending on $p$, but not on $A$
(see, for example, \cite{Sim1} or \cite[Theorem 2.1]{Ban}, where 
a discussion of the constants $a_p$ and $b_p$ can be found).   

(ii) 
Closer inspection of the proof yields the
 following explicit upper bound for $f_{a,\alpha}$
\[ \log f_{a,\alpha}(r)\leq 
  a^{-1/\alpha}\left ( \frac{\alpha}{1+\alpha} (\log_+r)^{1+1/\alpha} 
   +r\Gamma(1+1/\alpha,\log_+r)
  \right )\,,\]
where 
$\Gamma(\beta,s)$ denotes the incomplete gamma function 
\[ \Gamma(\beta,s)=\int_s^\infty \exp(-t)\,t^{\beta-1}\,dt\,.\] 
This follows  from (\ref{boasjensen}) together with the
estimate $n(r)\leq a^{-1/\alpha} (\log_+r)^{1/\alpha}$. 
\end{rembreak}

A simple consequence of the proposition is the following estimate for
the growth of the resolvent of a \qn\ $A\in E(a,\alpha)$. 
\begin{corollary}
If $A\in E(a,\alpha)$ is \qn, then 
\[ \norm{(zI-A)^{-1}}{}\leq
|z|^{-1}f_{a,\alpha}(|z|^{-1}|A|_{a,\alpha}) \text{ for $z\neq 0$}. 
\]
\end{corollary}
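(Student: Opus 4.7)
The plan is to reduce immediately to Proposition~\ref{prop3}(i) by a rescaling argument. For $z\neq 0$, I would factor
\[ zI-A = z\bigl(I - z^{-1}A\bigr), \]
so that $(zI-A)^{-1} = z^{-1}(I-z^{-1}A)^{-1}$ provided the right-hand side makes sense, and therefore
\[ \norm{(zI-A)^{-1}}{} = |z|^{-1}\norm{(I-z^{-1}A)^{-1}}{}. \]
It then suffices to apply the proposition to the operator $B:=z^{-1}A$.

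The two things to check before invoking Proposition~\ref{prop3}(i) are that $B$ is quasi-nilpotent and that $B\in E(a,\alpha)$ with an explicit gauge. For the first, $\sigma(z^{-1}A) = z^{-1}\sigma(A) = \set{0}$ by the spectral mapping theorem for scalar multiples, so $B$ is quasi-nilpotent. For the second, the singular numbers scale linearly, $s_n(z^{-1}A) = |z|^{-1}s_n(A)$, hence
\[ |B|_{a,\alpha} = \sup_{n\in\N} |z|^{-1}s_n(A)\exp(an^\alpha) = |z|^{-1}|A|_{a,\alpha}, \]
so $B\in E(a,\alpha)$ whenever $A$ is.

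Feeding $B$ into Proposition~\ref{prop3}(i) then yields
\[ \norm{(I-z^{-1}A)^{-1}}{} \leq f_{a,\alpha}(|z^{-1}A|_{a,\alpha}) = f_{a,\alpha}(|z|^{-1}|A|_{a,\alpha}), \]
and multiplying by $|z|^{-1}$ gives the stated bound. There is no real obstacle here; the corollary is a pure homogeneity/rescaling observation, so the only thing to watch is that the two scaling identities ($\sigma(z^{-1}A)=z^{-1}\sigma(A)$ and $s_n(z^{-1}A)=|z|^{-1}s_n(A)$) are invoked cleanly so that the hypotheses of the proposition are genuinely verified for $z^{-1}A$.
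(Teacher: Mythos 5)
Your proof is correct and is exactly the rescaling argument the paper intends when it calls this a ``simple consequence'' of Proposition~\ref{prop3} (the paper gives no explicit proof): factor $zI-A=z(I-z^{-1}A)$, note that $z^{-1}A$ is quasi-nilpotent with $|z^{-1}A|_{a,\alpha}=|z|^{-1}|A|_{a,\alpha}$, and apply the proposition. Nothing further is needed.
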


The proposition above can be used to obtain growth estimates for the
resolvents of any $A\in E(a,\alpha)$ by means of the following device.

\begin{theorem}\label{schurdecomp}
Let $A\in S_\infty$. Then  $A$ can be written as a sum 
\[ A=D+N, \]  
such that
\begin{itemize}
\item[(i)] $D\in S_\infty$, $N\in S_\infty$;
\item[(ii)] $D$ is normal and $\lam(D)=\lam(A)$; 
\item[(iii)] $N$ and $(zI-D)^{-1}N$ are \qn\ for every
  $z\in\varrho(D)=\varrho(A)$. 
\end{itemize}
\end{theorem}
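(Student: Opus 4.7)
The plan is to realize $A=D+N$ as the Schur--Ringrose triangularisation of a compact operator. First I would invoke the classical theorem of Ringrose, which asserts the existence of an orthonormal basis $\set{e_n}_{n\in\N}$ of $H$ and a nested chain of closed $A$-invariant subspaces $V_n:=\overline{\mathrm{span}}\set{e_1,\ldots,e_n}$ such that $(Ae_n,e_m)_H=0$ whenever $m>n$, and such that the diagonal coefficients $(Ae_n,e_n)_H$ enumerate the eigenvalue sequence $\lam(A)$ in order of decreasing modulus (with zeros appended if the closed linear span of the generalised eigenspaces of $A$ is a proper subspace of $H$). Existence follows by combining the Riesz spectral decomposition of $A$ on its finite-dimensional generalised eigenspaces (triangularised by ordinary Schur triangularisation) with a separate Ringrose-type triangularisation of the quasi-nilpotent part of $A$, followed by a Gram--Schmidt extension to a full basis of $H$.

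With this basis in hand, I would define $D\in L(H,H)$ by $De_n:=\lam_n(A)e_n$ and set $N:=A-D$. Then $D$ is diagonal in an orthonormal basis, hence normal and compact (since $|\lam_n(A)|\to 0$), with $\lam(D)=\lam(A)$ by construction, while $N=A-D\in\Si$ as a difference of two compact operators; this gives (i) and (ii). Since $D$ and $A$ are compact operators sharing the same non-zero eigenvalues with the same algebraic multiplicities, their spectra coincide (both being equal to this eigenvalue set, together with $\set{0}$ in the infinite-dimensional case), so $\varrho(D)=\varrho(A)$ as required in (iii).

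For the quasi-nilpotency in (iii), the key observation is that in the Schur basis $\set{e_n}$ the operator $N$ is strictly upper triangular, that is $(Ne_n,e_m)_H=0$ for $m\geq n$, and for any $z\in\varrho(D)$ the operator $(zI-D)^{-1}$ is diagonal in $\set{e_n}$. Consequently $(zI-D)^{-1}N$ is again a compact, strictly upper triangular operator with respect to $\set{e_n}$. A further application of Ringrose's theorem, or a direct argument via finite-dimensional truncations (which are nilpotent of order at most $m$) combined with Weyl's multiplicative eigenvalue inequality and the compactness of the operator, shows that every compact, strictly upper triangular operator on a separable Hilbert space is quasi-nilpotent, which yields both assertions of (iii). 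The main non-routine ingredient is precisely this theorem of Ringrose on compact triangular operators; everything else is routine bookkeeping once the Schur basis has been produced.
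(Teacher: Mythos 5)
The paper itself does not reprove this statement---it simply cites \cite[Theorem~3.2]{Ban}---so your argument has to stand on its own, and it contains one genuine gap. You assert the existence of an orthonormal basis $\set{e_n}_{n\in\N}$ of \emph{all of} $H$ with respect to which $A$ is upper triangular and whose diagonal enumerates $\lam(A)$, obtained by ``a separate Ringrose-type triangularisation of the quasi-nilpotent part of $A$, followed by a Gram--Schmidt extension''. This step fails in general. An operator is upper triangular with respect to an orthonormal basis $\set{e_n}$ precisely when each finite-dimensional subspace $\mathrm{span}\set{e_1,\ldots,e_n}$ is invariant; but a quasi-nilpotent compact operator such as the Volterra operator on $L^2[0,1]$ has \emph{no} non-trivial finite-dimensional invariant subspace whatsoever (such a subspace would contain an eigenvector, and the Volterra operator has none), hence admits no upper-triangular matrix representation in any orthonormal basis. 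Ringrose's theorem produces a maximal nest of invariant subspaces, but on the quasi-nilpotent part this nest is typically continuous, and no Gram--Schmidt procedure converts a continuous nest into a discrete flag of finite-dimensional subspaces. Consequently your later ``key observation'' that $N$ is strictly upper triangular in a global Schur basis is also unavailable in general.

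The gap is reparable, and the repair is essentially the construction of the cited reference. Let $E$ be the closed linear span of the root subspaces of $A$ belonging to its non-zero eigenvalues; $E$ is $A$-invariant, and Gram--Schmidt applied to a suitably ordered Jordan basis yields an orthonormal basis $\set{e_n}$ of $E$ (not of $H$) in which $A|_E$ is upper triangular with diagonal $\lam(A)$. Define $De_n:=\lam_n(A)e_n$ on $E$, set $D:=0$ on $E^\perp$, and put $N:=A-D$. With respect to $H=E\oplus E^\perp$, both $N$ and $(zI-D)^{-1}N$ are block upper triangular; their diagonal blocks are, respectively, a compact operator on $E$ that is strictly upper triangular in $\set{e_n}$ (quasi-nilpotent by the truncation argument you sketch, which is fine, or by Newburgh continuity of spectra for compact operators), and a scalar multiple of the compression of $A$ to $E^\perp$, which is compact with no non-zero eigenvalues and hence quasi-nilpotent by Riesz theory. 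Since the spectrum of a block upper triangular operator is contained in the union of the spectra of its diagonal blocks, (iii) follows. The remaining points in your write-up ($D$ normal and compact, $\lam(D)=\lam(A)$, $\sigma(D)=\sigma(A)$) are correct as stated once $D$ is defined this way.
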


\begin{proof} See \cite[Theorem~3.2]{Ban}.  
\end{proof} 

This result motivates the following definition.
\begin{definition}
  Let $A\in \Si$. A decomposition
\[ A=D+N \]
with $D$ and $N$ enjoying properties (i--iii) of the previous theorem
is called a {\it Schur decomposition of $A$}. The operators $D$ and
$N$ will be referred to as the {\it
  normal} and the {\it \qn\ part of the Schur decomposition of $A$}, 
respectively.
\end{definition}
\begin{rembreak}
\hspace*{\parindent}
(i) The terminology stems from the fact that in the finite
  dimensional setting the decomposition in Theorem~\ref{schurdecomp}
  can be obtained as follows: since any matrix is unitarily equivalent to an
  upper-triangular matrix 
by a
  classical result due to Schur, it suffices to establish 
  the result for matrices of this form. In this case, simply choose
  $D$ to be the diagonal part, and $N$ the off-diagonal part of the
  matrix. 

(ii) 
The decomposition is not unique, not even modulo unitary equivalence:
there is a matrix $A$ with two Schur decompositions $A=D_1+N_1$ and
$A=D_2+N_2$ such that $N_1$ is not unitarily equivalent to $N_2$ (see
\cite[Remark 3.5 (i)]{Ban}). 
Note, however, that the normal parts of any two Schur
decompositions of a given compact operator are always unitarily
equivalent. 
\end{rembreak}

Using the results in the previous section we are able to locate the
position of the normal part and the quasi-nilpotent part of an
operator $A\in E(a,\alpha)$ in the scale of exponential classes. 
\begin{proposition}\label{weyl:prop}
  Let $A\in E(a,\alpha)$. If $A=D+N$ is a Schur decomposition of $A$  
with normal part $D$ and \qn\ part $N$, then
\begin{itemize}
\item[(i)] $D \in E(a/(1+\alpha),\alpha)$ with
  $|D|_{a/(1+\alpha),\alpha}\leq |A|_{a,\alpha}$; 
\item[(ii)] $N\in E(a',\alpha)$ with $|N|_{a',\alpha}\leq
  2|A|_{a,\alpha}$, where $a'= a(1+(1+\alpha)^{1/\alpha})^{-\alpha}$.
\end{itemize}
\end{proposition}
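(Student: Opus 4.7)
My plan is to obtain both parts by invoking the results already established earlier in the paper, with essentially no new ideas required: part (i) is an immediate consequence of Proposition~\ref{prop:weylest} combined with the fact that singular numbers and moduli of eigenvalues coincide for normal operators, and part (ii) then follows from the addition result Proposition~\ref{propaddition} applied to $A=D+N$, rewritten as $N=A+(-D)$.

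For (i), I would first observe that since $D$ is normal, its singular numbers are precisely the moduli of its eigenvalues in decreasing order. Since the Schur decomposition guarantees $\lambda(D)=\lambda(A)$ (counted with multiplicity and ordered by magnitude), we get $s_n(D)=|\lambda_n(A)|$ for all $n\in\N$. Proposition~\ref{prop:weylest} applied to $A\in E(a,\alpha)$ yields $|\lambda(A)|_{a/(1+\alpha),\alpha}\leq |A|_{a,\alpha}$, which is exactly the desired bound on $|D|_{a/(1+\alpha),\alpha}$.

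For (ii), I would write $N=A+(-D)$ and apply Proposition~\ref{propaddition} with $K=2$, $a_1=a$, $a_2=a/(1+\alpha)$, using part (i) to locate $-D$ in $E(a/(1+\alpha),\alpha)$ with gauge at most $|A|_{a,\alpha}$ (the gauge is unchanged under negation since singular numbers are unaffected). The resulting exponent is
\[
a'=\bigl(a_1^{-1/\alpha}+a_2^{-1/\alpha}\bigr)^{-\alpha}
=\bigl(a^{-1/\alpha}+(a/(1+\alpha))^{-1/\alpha}\bigr)^{-\alpha}
=a\bigl(1+(1+\alpha)^{1/\alpha}\bigr)^{-\alpha},
\]
and the gauge estimate of Proposition~\ref{propaddition}(i) gives
\[
|N|_{a',\alpha}\leq 2\max\bigl\{|A|_{a,\alpha},\,|D|_{a/(1+\alpha),\alpha}\bigr\}\leq 2|A|_{a,\alpha},
\]
as desired.

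There is no genuine obstacle here: the only point that requires a little care is the algebraic simplification giving the stated form of $a'$, and the observation that one must factor out $a^{-1/\alpha}$ from $a^{-1/\alpha}+(a/(1+\alpha))^{-1/\alpha}=a^{-1/\alpha}(1+(1+\alpha)^{1/\alpha})$ before raising to the power $-\alpha$. Everything else is a direct citation of the previously proved propositions.
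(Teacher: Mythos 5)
Your proof is correct and follows exactly the route the paper takes: part (i) from Proposition~\ref{prop:weylest} via the identification $s_n(D)=|\lam_n(A)|$ for the normal part, and part (ii) from part (i) together with Proposition~\ref{propaddition} applied to $N=A-D$ with exponents $a$ and $a/(1+\alpha)$. The algebraic simplification of $a'$ and the gauge bound $2\max\{|A|_{a,\alpha},|D|_{a/(1+\alpha),\alpha}\}\leq 2|A|_{a,\alpha}$ are both carried out correctly.
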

\begin{proof}
Since $D$ is normal, its singular numbers 
coincide with its eigenvalues, which in turn coincide with the
eigenvalues of $A$. Assertion (i) is thus a consequence of
Proposition~\ref{prop:weylest}, while assertion (ii) follows from (i) 
and Proposition~\ref{propaddition} by taking $N=A-D$. 
\end{proof}
\begin{remark}
Assertion (i) above is sharp in the following sense: there is $A\in
E(a,\alpha)$ such that for any normal part $D$ of $A$ we have 
$D\not\in E(b,\alpha)$ whenever $b>a/(1+\alpha)$. This follows from
the corresponding statement in Proposition~\ref{prop:weylest} and the fact
that all normal parts of $A$ are unitarily equivalent. 
\end{remark}
For later use we define the following quantities, originally
introduced by Henrici \cite{henrici}.  
\begin{definition}
Let $a,\alpha>0$ and define $\nu_{a,\alpha}:\Si\to
\R_0^+\cup\set{\infty}$ 
by 
\[ \nu_{a,\alpha}(A):=
\inf \all{|N|_{a,\alpha}}{\mbox{$N$ is a \qn\ part of $A$}}\,.\]
We call $\nu_{a,\alpha}(A)$ the {\it $(a,\alpha)$-departure from
  normality of $A$}. 
\end{definition}
\begin{remark} 
Henrici originally introduced this quantity for matrices and 
with the $(a,\alpha)$-gauge of $N$ replaced by the Hilbert-Schmidt
norm. For a discussion of the case where the $(a,\alpha)$-gauge is
replaced by a Schatten norm and its uses to obtain
resolvent estimates for Schatten class operators see \cite{Ban}. 
\end{remark}
The term `departure from normality' is justified in view of the
following characterisation. 
\begin{proposition}
  Let $A\in E(a,\alpha)$. Then 
\[ \nu_{a,\alpha}(A)=0\Leftrightarrow \mbox{ $A$ is normal}. \]
\end{proposition}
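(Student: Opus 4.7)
The plan is to prove both implications, with the reverse direction being the substantive one.

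For the easy direction, if $A$ is normal I will exhibit the trivial Schur decomposition $A=A+0$. Concretely, $D:=A$ is normal with $\lambda(D)=\lambda(A)$, and $N:=0$ is clearly quasi-nilpotent and makes $(zI-D)^{-1}N=0$ quasi-nilpotent for every $z\in\varrho(A)$; so all three conditions of Theorem~\ref{schurdecomp} are met. Since $|0|_{a,\alpha}=0$, we get $\nu_{a,\alpha}(A)=0$.

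For the forward direction, assume $\nu_{a,\alpha}(A)=0$. By definition of infimum there exist Schur decompositions $A=D_n+N_n$ with $|N_n|_{a,\alpha}\to 0$. The key observation is that the operator norm is controlled by the gauge: from $|N|_{a,\alpha}=\sup_k s_k(N)\exp(ak^{\alpha})\geq s_1(N)\exp(a)=\|N\|\exp(a)$, I obtain $\|N_n\|\leq |N_n|_{a,\alpha}e^{-a}\to 0$. Therefore $D_n=A-N_n\to A$ in operator norm. Each $D_n$ is normal, and normality is preserved under operator norm limits (since $D\mapsto DD^*-D^*D$ is continuous from $L(H,H)$ to itself with respect to the operator norm, so the set of normal operators is closed). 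Hence $A$ is normal.

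The only step requiring any care is the closedness of the normal operators under operator norm convergence; I would either cite this as a standard fact or briefly verify $\|D_n D_n^*-D_n^*D_n\|\to \|AA^*-A^*A\|$ using the triangle inequality together with the submultiplicativity of the norm and the uniform bound $\|D_n\|\leq \|A\|+\|N_n\|$. No serious obstacle arises: the proposition is essentially a formal consequence of the definition of $\nu_{a,\alpha}$ and the trivial inequality $\|N\|\leq |N|_{a,\alpha}e^{-a}$, combined with the existence of the trivial Schur decomposition of a normal operator.
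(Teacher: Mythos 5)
Your proof is correct and follows essentially the same route as the paper: the gauge inequality $\|N\|=s_1(N)\leq e^{-a}|N|_{a,\alpha}$ shows the normal parts $D_n$ converge to $A$ in operator norm, and normality passes to the limit; the converse uses the trivial decomposition $A=A+0$. The extra detail you supply (verifying the trivial decomposition is a Schur decomposition, and the closedness of the normal operators) is exactly what the paper leaves implicit.
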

\begin{proof}
  Let $\nu_{a,\alpha}(A)=0$. Then there exists a sequence of Schur
  decompositions with \qn\ parts $N_n$ such that $|N_n|_{a,\alpha}\rightarrow
  0$. Thus 
\[ \norm{A-D_n}{}=\norm{N_n}{}=s_1(N_n)\leq
  \exp(-a)|N|_{a,\alpha}\rightarrow 0, \]
where $D_n$ are
  the corresponding normal parts. Thus $A$ is a limit of normal
  operators converging in the uniform operator topology and is
  therefore normal. The converse is trivial.
\end{proof}

For a given $A\in E(a,\alpha)$ the departure from normality  is 
difficult to calculate. 
The following simple but somewhat crude bound is useful in
practice. 
\begin{proposition}\label{npbounds}
Let $A\in E(a,\alpha)$. 
Then 
\[ \nu_{b,\alpha}(A)\leq 2|A|_{a,\alpha}\,\]
whenever $b\leq a(1+(1+\alpha)^{1/\alpha})^{-\alpha}$. 
\end{proposition}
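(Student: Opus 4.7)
The plan is to derive this as an essentially immediate corollary of Proposition~\ref{weyl:prop}~(ii) together with the monotonicity of the gauge in the $a$-parameter.

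First I would fix an arbitrary Schur decomposition $A=D+N$ of $A$, whose existence is guaranteed by Theorem~\ref{schurdecomp}. By Proposition~\ref{weyl:prop}~(ii), the quasi-nilpotent part $N$ lies in $E(a',\alpha)$ with
\[ |N|_{a',\alpha}\leq 2|A|_{a,\alpha}, \]
where $a'=a(1+(1+\alpha)^{1/\alpha})^{-\alpha}$.

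Next I would observe that whenever $b\leq a'$, one has $\exp(bn^\alpha)\leq \exp(a'n^\alpha)$ for every $n\in\N$, so directly from the definition of the $(\cdot,\alpha)$-gauge,
\[ |N|_{b,\alpha}=\sup_{n\in\N}s_n(N)\exp(bn^\alpha)\leq \sup_{n\in\N}s_n(N)\exp(a'n^\alpha)=|N|_{a',\alpha}\leq 2|A|_{a,\alpha}. \]
Since this estimate holds for \emph{every} Schur decomposition of $A$ (and hence for every quasi-nilpotent part $N$ of $A$), taking the infimum over all such $N$ yields
\[ \nu_{b,\alpha}(A)\leq 2|A|_{a,\alpha}, \]
which is the claim.

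There is no real obstacle here; the proposition is a direct packaging of Proposition~\ref{weyl:prop}~(ii) together with the elementary monotonicity $|\cdot|_{b,\alpha}\leq |\cdot|_{a',\alpha}$ for $b\leq a'$, and the proof should occupy only a few lines.
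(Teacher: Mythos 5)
Your proposal is correct and is essentially identical to the paper's proof, which likewise combines Proposition~\ref{weyl:prop}~(ii) with the monotonicity $|N|_{b,\alpha}\leq |N|_{a',\alpha}$ for $b\leq a'$. (A minor remark: since $\nu_{b,\alpha}(A)$ is an infimum, the bound for a single Schur decomposition already suffices; you do not need it for all of them.)
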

\begin{proof}
  Follows from Proposition~\ref{weyl:prop} together with the fact that
  $|N|_{b,\alpha}\leq |N|_{a',\alpha}$ whenever $b\leq a'$. 
\end{proof}
We are now ready to deduce resolvent estimates for arbitrary $A\in
E(a,\alpha)$. Using a Schur decomposition $A=D+N$ with $D$ normal and
$N$ \qn\ we consider $A$ as a
perturbation of $D$ by $N$. Since $D$ is normal we have  
\begin{equation}\label{res:normal}
\norm{(zI-D)^{-1}}{}=\frac{1}{d(z,\sigma(D))}\quad (z\in\varrho(D)),
\end{equation}
where for $z\in\C$ and $\sigma\subset \C$ closed, 
\[ d(z,\sigma):=\inf_{\lam\in\sigma} |z-\lam| \]
denotes the distance of $z$ to $\sigma$. The influence of the
perturbation $N$ on the other hand, is controlled by
Proposition~\ref{prop3}. All in all, we have the following.

\begin{theorem}\label{carle:theo}
Let $A\in E(a,\alpha)$. If $b\leq 
a(1+(1+\alpha)^{1/\alpha})^{-\alpha}$, then
\begin{equation}
\label{resolventest}
\norm{(zI-A)^{-1}}{}\leq \frac{1}{d(z,\sigma(A))}
f_{b,\alpha} \left ( \frac{\nu_{b,\alpha}(A)}{d(z,\sigma(A))} \right
)\,.
\end{equation}
\end{theorem}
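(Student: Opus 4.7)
The plan is to follow the \textbf{Henrici-style} strategy already sketched in the paragraph preceding the theorem: pick a Schur decomposition $A = D + N$ (which exists by Theorem~\ref{schurdecomp}), view $A$ as the perturbation of the normal $D$ by the \qn\ $N$, and reduce the resolvent bound to the two ingredients already proved, namely the normal resolvent identity \eqref{res:normal} and the \qn\ bound of Proposition~\ref{prop3}(i).

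Concretely, I would first fix $z \in \varrho(A) = \varrho(D)$ and factor
\[
zI - A \;=\; (zI - D) - N \;=\; (zI - D)\bigl(I - (zI - D)^{-1} N\bigr),
\]
so that
\[
\norm{(zI - A)^{-1}}{} \;\leq\; \norm{(I - (zI - D)^{-1} N)^{-1}}{}\,\norm{(zI - D)^{-1}}{}.
\]
The second factor is handled instantly by \eqref{res:normal}, giving $1/d(z,\sigma(A))$. For the first factor, I would invoke Theorem~\ref{schurdecomp}(iii): $(zI-D)^{-1} N$ is \qn. To apply Proposition~\ref{prop3}(i) I need it to lie in a suitable exponential class, with a manageable gauge bound.

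The $E(b,\alpha)$-membership of $N$ (and hence of $(zI - D)^{-1}N$ by the ideal property Proposition~\ref{prop1}) is exactly what dictates the hypothesis $b \leq a(1 + (1+\alpha)^{1/\alpha})^{-\alpha}$: by Proposition~\ref{weyl:prop}(ii), $N \in E(a',\alpha)$ with $a' = a(1+(1+\alpha)^{1/\alpha})^{-\alpha}$, and monotonicity of the scale gives $N \in E(b,\alpha)$ for all $b \leq a'$. Using Proposition~\ref{prop1},
\[
|(zI - D)^{-1} N|_{b,\alpha} \;\leq\; \norm{(zI - D)^{-1}}{}\, |N|_{b,\alpha} \;=\; \frac{|N|_{b,\alpha}}{d(z,\sigma(A))}.
\]
Then Proposition~\ref{prop3}(i) applied to the \qn\ operator $(zI-D)^{-1}N$ yields
\[
\norm{(I - (zI-D)^{-1}N)^{-1}}{} \;\leq\; f_{b,\alpha}\!\left(\frac{|N|_{b,\alpha}}{d(z,\sigma(A))}\right).
\]

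The last step is to pass from a fixed Schur decomposition to the infimum defining $\nu_{b,\alpha}(A)$. Since $f_{b,\alpha}$ is visibly increasing in $r \geq 0$ (each factor $1 + r\exp(-bn^\alpha)$ is increasing), I can take the infimum over all \qn\ parts $N$ of $A$ in the bound above, obtaining
\[
\norm{(zI-A)^{-1}}{} \;\leq\; \frac{1}{d(z,\sigma(A))}\, f_{b,\alpha}\!\left(\frac{\nu_{b,\alpha}(A)}{d(z,\sigma(A))}\right),
\]
which is the claim. The only place where any care is needed — and the reason the constant $a(1+(1+\alpha)^{1/\alpha})^{-\alpha}$ appears explicitly in the statement — is step~3, namely matching the exponential class in which the \qn\ part $N$ is guaranteed to live (via Proposition~\ref{weyl:prop}(ii)) with the class in which we want to evaluate the gauge bound $f_{b,\alpha}$; I expect this to be the only subtle bookkeeping point, everything else being a straightforward chain of previously established estimates.
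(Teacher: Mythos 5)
Your proposal is correct and follows essentially the same route as the paper's own proof: the same Schur decomposition, the same factorisation $(zI-A)=(zI-D)(I-(zI-D)^{-1}N)$, the same use of Proposition~\ref{weyl:prop}(ii), Proposition~\ref{prop1} and Proposition~\ref{prop3}(i), and the same final infimum over quasi-nilpotent parts (your explicit remark that $f_{b,\alpha}$ is increasing is a point the paper leaves implicit).
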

\begin{proof}
  Fix $b\leq a(1+(1+\alpha)^{1/\alpha})^{-\alpha}$. 
Then there is 
  a Schur decomposition of $A$ with normal part $D$ and
  \qn\ part $N\in E(b,\alpha)$ by Proposition~\ref{weyl:prop}. 
Since the bound above is trivial for
  $z\in\sigma(A)$ we may assume $z\in\varrho (A)$. As $D$ and $N$ stem
  from a Schur decomposition of $A$ we see that $(zI-D)^{-1}$ exists
  (because $\sigma(A)=\sigma(D))$ and that $(zI-D)^{-1}N$ is \qn. 
Moreover  
 \[ |(zI-D)^{-1}N|_{b,\alpha}\leq \norm{(zI-D)^{-1}}{}|N|_{b,\alpha}=
  \frac{|N|_{b,\alpha}}{d(z,\sigma(D))}\,,\] 
by (\ref{res:normal}) and Proposition~\ref{prop1}.  
Thus $(I-(zI-D)^{-1}N)$ is invertible in $L$ and 
\[ \norm{(I-(zI-D)^{-1}N)^{-1}}{}\leq 
f_{b,\alpha}\left (\frac{|N|_{b,\alpha}}{d(z,\sigma(D))}\right ), \]
by Proposition~\ref{prop3}. 
Now, since $(zI-A)=(zI-D)(I-(zI-D)^{-1}N)$, we conclude that
$(zI-A)$ is invertible in $L$ and 
\begin{eqnarray*}
\norm{(z-A)^{-1}}{} & \leq & \norm{(I-(zI-D)^{-1}N)^{-1}}{}\norm{(zI-D)^{-1}}{}
\\ 
& \leq & \frac{1}{d(z,\sigma(D))}
f_{b,\alpha}\left (\frac{|N|_{b,\alpha}}{d(z,\sigma(D))}\right ). 
\end{eqnarray*} 
Taking the infimum over all Schur decompositions while using 
$\sigma(A)=\sigma(D)$ once again the result follows. 
\end{proof}
\begin{rembreak}
\hspace*{\parindent}
(i) The estimate remains valid if
    $\nu_{b,\alpha}(A)$ is replaced by something larger, for example by the
    upper bound given in Proposition~\ref{npbounds}. 

(ii) The estimate is sharp in the
  sense that if $A$ is normal then (\ref{resolventest}) reduces
    to the sharp estimate (\ref{res:normal}).
\end{rembreak}
  
\section{Bounds for the spectral distance}
\label{section:specdist}
Using the resolvent estimates obtained in the previous section it is
possible to give upper bounds for the Hausdorff distance of the
spectra of operators in $E(a,\alpha)$. 
Recall that the {\it Hausdorff distance}
${\rm Hdist}\,(.,.)$ is the following 
metric defined on the space of compact subsets of $\C$
\[
{\rm Hdist}\,(\sigma_1,\sigma_2):=\max\set{\hat{d}(\sigma_1,\sigma_2),
  \hat{d}(\sigma_2,\sigma_1)},
\]
where 
\[ \hat{d}(\sigma_1,\sigma_2):=\sup_{\lam\in\sigma_1}d(\lam,\sigma_2) \]
and $\sigma_1$ and $\sigma_2$ are two compact subsets of $\C$. 
For $A,B\in L$ we borrow terminology from matrix perturbation theory
and call $\hat{d}(\sigma(A),\sigma(B))$ the {\it spectral variation of $A$
  with respect to $B$} and $\rm{Hdist}\,(\sigma(A),\sigma(B))$ the
{\it spectral distance of $A$ and $B$}.  

The main tool to bound the spectral variation 
is the following result, which is based on a simple but
powerful argument usually credited to Bauer and Fike~\cite{bauerfike}
who first employed it in a finite-dimensional context. 
\begin{proposition}
  Let $A\in\Si$. Suppose that there is a strictly monotonically
  increasing surjection $g:\R_0^+\to \R_0^+$ such that 
\[ \norm{ (zI-A)^{-1}}{}\leq g( d(z,\sigma(A))^{-1}) 
 \quad (\forall z\in \varrho(A)). \]
Then for any $B\in L$, the spectral variation of $B$ with respect to
$A$ satisfies
\[ \hat{d}(\sigma(B),\sigma(A))\leq h( \norm{A-B}{}), \]
where $h:\R_0^+\to\R_0^+$ is the function defined by 
\[ h(r)=(\tilde g(r^{-1}))^{-1}\,\]
and $\tilde g:\R_0^+\to \R_0^+$ is the inverse of the function $g$. 
\end{proposition}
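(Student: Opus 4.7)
The plan is to execute a Bauer--Fike style argument: pick an arbitrary point in $\sigma(B)$ and use the hypothesised resolvent bound for $A$ to control its distance to $\sigma(A)$.

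\medskip

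First, I would fix $\lambda\in\sigma(B)$ and aim to bound $d(\lambda,\sigma(A))$ from above by $h(\|A-B\|)$. If $\lambda\in\sigma(A)$ this distance is zero, so assume $\lambda\in\varrho(A)$. Then write
\[
\lambda I-B=(\lambda I-A)-(B-A)=(\lambda I-A)\bigl(I-(\lambda I-A)^{-1}(B-A)\bigr).
\]
Since $\lambda I-A$ is invertible but $\lambda I-B$ is not, the second factor fails to be invertible in $L$. The standard Neumann series argument then forces
\[
\bigl\|(\lambda I-A)^{-1}(B-A)\bigr\|\geq 1,
\]
because any operator $T$ with $\|T\|<1$ has $I-T$ invertible.

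\medskip

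Next I would combine this with the hypothesised resolvent bound: submultiplicativity gives
\[
1\leq\bigl\|(\lambda I-A)^{-1}\bigr\|\,\|A-B\|\leq g\bigl(d(\lambda,\sigma(A))^{-1}\bigr)\,\|A-B\|,
\]
so that
\[
g\bigl(d(\lambda,\sigma(A))^{-1}\bigr)\geq\|A-B\|^{-1}.
\]
Now I would invoke the hypothesis that $g$ is a strictly monotonically increasing surjection of $\R_0^+$ onto itself, which ensures that $\tilde g$ exists, is also strictly monotonically increasing on $\R_0^+$, and satisfies the equivalence $g(x)\geq y\Leftrightarrow x\geq\tilde g(y)$. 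Applying this with $x=d(\lambda,\sigma(A))^{-1}$ and $y=\|A-B\|^{-1}$ yields
\[
d(\lambda,\sigma(A))^{-1}\geq\tilde g\bigl(\|A-B\|^{-1}\bigr),
\]
and consequently $d(\lambda,\sigma(A))\leq\bigl(\tilde g(\|A-B\|^{-1})\bigr)^{-1}=h(\|A-B\|)$. Taking the supremum over $\lambda\in\sigma(B)$ finishes the proof.

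\medskip

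There is no real obstacle here; the argument is essentially a one-line factorisation plus monotonicity. The only points requiring a moment's care are the trivial handling of the case $\lambda\in\sigma(A)$, the justification that $\sigma(B)$ is non-empty (which is automatic since $B\in L$), and the correct use of the strict monotonicity/surjectivity of $g$ to guarantee that $\tilde g$ has the expected order-reversing behaviour after the two reciprocations in the definition of $h$.
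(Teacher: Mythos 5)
Your argument is correct and is essentially the paper's own proof: the same factorisation $\lambda I-B=(\lambda I-A)\bigl(I-(\lambda I-A)^{-1}(B-A)\bigr)$ yielding $\norm{A-B}{}^{-1}\leq\norm{(\lambda I-A)^{-1}}{}$ for $\lambda\in\sigma(B)\cap\varrho(A)$, followed by the monotonicity of $g$ and $\tilde g$. The only (immaterial) difference is that the paper states the Neumann-series step as a contradiction and explicitly notes the reduction to $A\neq B$, which you may as well record since $h(0)$ requires a limiting interpretation.
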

\begin{proof}
Let $B\in L$. In what follows we shall use the abbreviations
\[ d:=d(z,\sigma(A)),\quad E:=B-A. \]
Without loss of generality we may assume that $E\neq 0$. 
We shall first establish the following implication:
\begin{equation}\label{specdist:imp}
z\in \sigma(B)\cap \varrho (A) \Rightarrow \norm{E}{}^{-1}\leq
\norm{(zI-A)^{-1}}{}.
\end{equation}
To see this let $z\in \sigma(B)\cap \varrho (A)$ and suppose to the contrary that
\[ \norm{(zI-A)^{-1}}{}\norm{E}{}<1. \]
Then $\bigl(I-(zI-A)^{-1}E\bigr)$ is invertible in $L$, 
so $(zI-B)=(zI-A)\bigl(I-(zI-A)^{-1}E\bigr)$ 
is invertible in $L$. Hence 
$z\in\varrho (B)$ which contradicts $z\in\sigma (B)$. Thus the
implication (\ref{specdist:imp}) holds.  

In order to prove the proposition it suffices 
to show that 
\begin{equation}
z\in\sigma (B) \Rightarrow d(z,\sigma(A))\leq h(\norm{E}{})\,,
\label{svar:theo:e1}
\end{equation}
which is proved as follows. Let $z\in\sigma(B)$. 
If $z\in \sigma (A)$ there is nothing
to prove. We may thus assume that $z\in\varrho (A)$. Hence, by (\ref{specdist:imp}),
\[
\norm{E}{}^{-1}\leq\norm{(zI-A)^{-1}}{} \leq g(d^{-1}).  
\]
Since
$g$ is strictly monotonically increasing, so is $\tilde g$. Thus 
\[ \tilde g(\norm{E}{}^{-1})\leq d^{-1}, \]
and hence
\[ d(z,\sigma(A))=d\leq (\tilde g(\norm{E}{}^{-1}))^{-1}=h(\norm{E}{})=h(\norm{A-B}{}).
\]
\end{proof}

Using the proposition above together with the resolvent estimates in
Theorem~\ref{carle:theo} we now obtain the following spectral
variation and spectral distance
formulae.

\begin{theorem} 
\label{theo:specdist}
Let $A\in E(a,\alpha)$ and define 
$a':=a(1+(1+\alpha)^{1/\alpha})^{-\alpha}$. 
\begin{itemize}
\item[(i)] If $B\in L$ and $b>a'$ then 
\begin{equation}
\label{eq:specvar}
\hat d(\sigma(B),\sigma(A))\leq \nu(A)_{b,\alpha} h_{b,\alpha}\left
  ( \frac{ \norm{A-B}{}}{\nu_{b,\alpha}(A)} \right )\,.
\end{equation}
\item[(ii)] If $B\in E(a,\alpha)$ and $b>a'$ then
\begin{equation}
\label{eq:specdist}
{\rm Hdist}\,(\sigma(A),\sigma(B))\leq m h_{b,\alpha}\left
  ( \frac{ \norm{A-B}{}}{m} \right )\,,
\end{equation}
where $m:=\max\set{ \nu(A)_{b,\alpha}, \nu(B)_{b,\alpha}}$. 
\end{itemize}
Here, the
function $h_{b,\alpha}:\R_0^+\to \R_0^+$ is given by 
\[ h_{b,\alpha}(r):=(\tilde g_{b,\alpha}(r^{-1}))^{-1}, \]
where $\tilde g:\R_0^+\to \R_0^+$ is the inverse of
the function $g_{b,\alpha}:\R_0^+\to \R_0^+$ defined by  
\[ g_{b,\alpha}(r):=rf_{b,\alpha}(r)\,, \]
and $f_{a,\alpha}$ is the function defined in Proposition~\ref{prop3}. 
\end{theorem}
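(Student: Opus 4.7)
The plan is to apply the Bauer--Fike-type proposition immediately preceding the theorem, fed with the resolvent bound from Theorem~\ref{carle:theo}. For (i), fix $b$ in the admissible range and abbreviate $\nu:=\nu_{b,\alpha}(A)$. Recasting the bound of Theorem~\ref{carle:theo} as
$$\norm{(zI-A)^{-1}}{}\leq \nu^{-1}\,g_{b,\alpha}\bigl(\nu/d(z,\sigma(A))\bigr),$$
I define $g(s):=\nu^{-1}g_{b,\alpha}(\nu s)$. Since $f_{b,\alpha}$ is strictly increasing on $\R_0^+$ (being a product of strictly increasing positive factors), $g_{b,\alpha}(r)=rf_{b,\alpha}(r)$ is a strictly monotone surjection of $\R_0^+$ onto itself, and hence so is $g$. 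This is exactly the hypothesis required by the Bauer--Fike proposition, which yields $\hat{d}(\sigma(B),\sigma(A))\leq h(\norm{A-B}{})$ with $h(r)=(\tilde g(r^{-1}))^{-1}$.

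A short inversion computation gives $\tilde g(v)=\nu^{-1}\tilde g_{b,\alpha}(\nu v)$, so
$$h(r)=\nu/\tilde g_{b,\alpha}(\nu/r)=\nu\,h_{b,\alpha}(r/\nu),$$
where the second equality uses $h_{b,\alpha}(s)=1/\tilde g_{b,\alpha}(1/s)$. This is precisely (\ref{eq:specvar}).

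For (ii), apply (i) once in each direction to bound $\hat{d}(\sigma(B),\sigma(A))$ and $\hat{d}(\sigma(A),\sigma(B))$ in terms of $\nu_{b,\alpha}(A)$ and $\nu_{b,\alpha}(B)$ respectively. To combine these into a single bound involving $m:=\max\{\nu_{b,\alpha}(A),\nu_{b,\alpha}(B)\}$ it suffices to observe that the map $r\mapsto r\,h_{b,\alpha}(c/r)$ is non-decreasing in $r>0$ for every fixed $c>0$. After the substitution $s=c/r$ this reduces to showing that $h_{b,\alpha}(s)/s$ is non-increasing in $s$; using $h_{b,\alpha}(s)=1/\tilde g_{b,\alpha}(1/s)$ and substituting $t=1/s$, $u=\tilde g_{b,\alpha}(t)$, this is equivalent to monotonicity of $u\mapsto g_{b,\alpha}(u)/u=f_{b,\alpha}(u)$, which is immediate. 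Hence both one-sided distances are dominated by $m\,h_{b,\alpha}(\norm{A-B}{}/m)$, giving (\ref{eq:specdist}).

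The main technical point is the scaling bookkeeping that translates the resolvent bound into the form demanded by the Bauer--Fike proposition and extracts the answer in the stated shape $\nu\,h_{b,\alpha}(\,\cdot\,/\nu)$; once this is laid out, the monotonicity step underlying (ii) is a one-liner thanks to the identity $g_{b,\alpha}(u)/u=f_{b,\alpha}(u)$.
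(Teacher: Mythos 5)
Your proof is correct and follows essentially the same route as the paper: feed the resolvent bound of Theorem~\ref{carle:theo} into the Bauer--Fike proposition after the rescaling $g(s)=\nu^{-1}g_{b,\alpha}(\nu s)$, and invert. The only (immaterial) difference is in part (ii), where the paper enlarges $\nu_{b,\alpha}$ to $m$ inside the resolvent bound \emph{before} invoking that proposition, whereas you enlarge it \emph{afterwards} via the monotonicity of $r\mapsto r\,h_{b,\alpha}(c/r)$ --- both steps reduce to the monotonicity of $f_{b,\alpha}$.
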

\begin{proof} To prove (i) fix $b\geq a'$. The assertion now follows from the
  previous proposition by noting that 
\[ \norm{(zI-A)^{-1}}{}\leq \frac{1}{\nu_{b,\alpha}(A)} g_{b,\alpha}\left (
  \frac{\nu_{b,\alpha}(A)}{d(z,\sigma(A))}\right ) \]
by Theorem~\ref{carle:theo}. 
To prove (ii) fix $b\geq a'$. Then it is not difficult see that 
\[ \norm{(zI-A)^{-1}}{}\leq \frac{1}{m} g_{b,\alpha}\left (
  \frac{m}{d(z,\sigma(A))}\right ),  \]
and 
\[ \norm{(zI-B)^{-1}}{}\leq \frac{1}{m} g_{b,\alpha}\left (
  \frac{m}{d(z,\sigma(B))}\right ),  \]
and the assertion follows as in the proof of (i). 
\end{proof}

\begin{rembreak}
\hspace*{\parindent}
(i) Note that $\lim_{r \downarrow 0}h_{b,\alpha}(r)=0$, so
  the estimate for the spectral distance becomes small when
  $\norm{A-B}{}$ is small. In
  fact, it can be shown that 
\[ \log h_{b,\alpha}(r)\sim
  -b^{1/(1+\alpha)}\left (
   \frac{1+\alpha}{\alpha}\right ) ^{\alpha/(1+\alpha)}|\log
   r|^{\alpha/(1+\alpha)}\text{ as $r\downarrow 0$}. \]
 This follows from the asymptotics in Proposition~\ref{prop3} 
  together with the fact that
   if $\log f(r)\sim a(\log r)^{\beta}$, then $\log \tilde{f}(r)\sim
   a^{-1/\beta}(\log r)^{1/\beta}$ where $\tilde{f}$ is the inverse of
   $f$. 

    (ii) It is not difficult to see, for example by arguing as in
    the proof of part (ii) of the theorem, that the inequalities
    (\ref{eq:specvar}) 
    and (\ref{eq:specdist}) 
    above remain valid if $\nu_{b,\alpha}(A)$ or
    $\nu_{b,\alpha}(B)$ is replaced by
    something larger --- for example, by the upper bounds given in
    Proposition~\ref{npbounds}. 
  
    (iii) Assertion (ii) of the theorem is sharp in the sense
  that if both operators are normal, then (ii) reduces to 
   \[ {\rm Hdist}\,(\sigma(A),\sigma(B))\leq \norml{A-B}\,. \]  
\end{rembreak}

\section{Appendix: Monotone arrangements}
\label{appendixa}
In this appendix we present a number of results concerning sequences 
and their arrangements used in Section~\ref{section:expoclass}. 

Let $a:\N\rightarrow \R$ be a sequence. Define
\[ \norm{a}{+}:=\sup_{n\in\N}a_n, \]
\[ \norm{a}{-}:=\inf_{n\in\N}a_n. \]
For $u:\N\rightarrow \R$ call
\[ {\rm rank}\,(u):={\rm card}\,\all{n\in\N}{u_n\neq 0}. \]
\begin{definition}
  Let $a:\N\rightarrow\R$ be a sequence. Let extended real-valued
  sequences $a^{(+)}$ and $a^{(-)}$ be defined by
\[ a^{(+)}:\N\rightarrow \R\cup\set{-\infty,\infty}, \quad
a^{(+)}_n:=\inf\all{\norm{a-u}{+}}{\text{rank}\,u<n}, \]
\[ a^{(-)}:\N\rightarrow \R\cup\set{-\infty,\infty}, \quad
a^{(-)}_n:=\sup\all{\norm{a-u}{-}}{\text{rank}\,u<n}. \] We call
$a^{(+)}$ the {\it decreasing arrangement of $a$}, and $a^{(-)}$ the
{\it increasing arrangement of $a$}.
\end{definition}
This terminology is justified in view of the fact that $a^{(+)}$
(respectively $a^{(-)}$) is a decreasing (respectively increasing)
sequence.
Moreover, if $a$ is monotonically decreasing, then
$a^{(+)}=a$, and similarly for monotonically increasing sequences. 

More generally, we will consider monotone arrangements of collections
of sequences by first amalgamating them into one sequence and then
regarding the resulting monotone arrangement. A more precise definition
is the following. 
\begin{definition}
Given $K$ real-valued sequences 
$\set{a_n^{(1)}}_{n\in\N},\ldots,\set{a_n^{(K)}}_{n\in\N}$, define a
new sequence $a:\N\to\R$ by 
\[ a_{(k-1)K+i}=a^{(i)}_k \quad \text{ for $k\in\N$ and
  $1\leq i\leq K$}\,.\]
We then call $a^{(+)}$ ($a^{(-)}$) the \textit{decreasing (increasing) 
arrangement of the $K$ sequences  $a^{(1)},\cdots,a^{(K)}$.}
\end{definition}

Our application of decreasing arrangements will typically be to
singular number sequences, all of which converge to zero at some
stretched exponential rate.  Technically and notationally it is
preferable to work with the logarithms of reciprocals of such
sequences, that is, increasing sequences converging to $+\infty$ at
some polynomial rate.

The following is the main result of this appendix.

\begin{proposition}
  Let $\alpha>0$ and $K\in \N$. Suppose that for each $k\in
  \set{1,\ldots,K}$ we are given a real sequence $a^{(k)}$, a
  positive constant $a_k>0$, and a real number $A_k$. Let $a^{(-)}$
  denote the increasing arrangement of the $K$ sequences
  $a^{(1)},\ldots, a^{(K)}$, and
  define
\[ c=\left ( \sum_{k=1}^Ka_k^{-1/\alpha} \right
)^{-\alpha}\,.\]
\begin{itemize}
\item[(i)] If 
\[ a_n^{(k)}\geq a_kn^\alpha +A_k \quad( \forall n\in \N,\,k\in
\set{1,\ldots,K})\,,\]
then 
\[ a^{(-)}_n\geq cn^\alpha + \min\set{A_1,\ldots,A_K}\quad (\forall n\in\N)\,.\] 
\item[(ii)] If 
\[ a_n^{(k)}\leq a_kn^\alpha +A_k \quad( \forall n\in \N,\,k\in
\set{1,\ldots,K})\,,\]
then 
\[ a^{(-)}_n\leq c(n+K)^\alpha + \max\set{A_1,\ldots,A_K}\quad (\forall n\in\N)\,.\]
\end{itemize}
\end{proposition}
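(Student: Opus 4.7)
The plan is to exploit the combinatorial description of the increasing arrangement: directly from the definition, $a^{(-)}_n < t$ is equivalent to the existence of at least $n$ pairs $(k,m)\in\{1,\ldots,K\}\times\N$ satisfying $a^{(k)}_m < t$, and $a^{(-)}_n \leq t$ is equivalent to at least $n$ pairs satisfying $a^{(k)}_m\leq t$. Both parts then reduce to counting pairs lying below a well-chosen threshold, using the polynomial lower (respectively upper) bound on each $a^{(k)}$. The algebraic engine throughout is the defining identity $\sum_{k=1}^K a_k^{-1/\alpha} = c^{-1/\alpha}$, which converts a sum of inverse powers of the $a_k$ into a single inverse power of $c$.

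For part (i) I would argue by contradiction. Set $t := cn^\alpha + \min_k A_k$ and suppose $a^{(-)}_n < t$. By the lower bound $a^{(k)}_m \geq a_k m^\alpha + A_k$, for each $k$ the number of $m\in\N$ with $a^{(k)}_m < t$ is at most the number of positive integers $m$ with $m^\alpha < (t-A_k)/a_k$. Since the number of positive integers strictly below a positive real is always strictly less than that real, and since $t - A_k \leq cn^\alpha$ for every $k$, each single-sequence count is strictly less than $c^{1/\alpha} n\, a_k^{-1/\alpha}$. Summing over $k$ and invoking the identity above yields a total strictly less than $n$, contradicting the assumption that $n$ pairs satisfy $a^{(k)}_m < t$.

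For part (ii) I would instead produce at least $n$ pairs satisfying $a^{(k)}_m \leq t := c(n+K)^\alpha + \max_k A_k$. By the upper bound $a^{(k)}_m \leq a_k m^\alpha + A_k$, it suffices to count positive integers $m$ with $a_k m^\alpha + A_k \leq t$. Using $t - A_k \geq c(n+K)^\alpha$, this count is at least $\lfloor c^{1/\alpha}(n+K)\, a_k^{-1/\alpha}\rfloor \geq c^{1/\alpha}(n+K)\, a_k^{-1/\alpha} - 1$. Summing over the $K$ indices and again applying the identity, the main term contributes $n+K$ while the $K$ unit losses from the floor operation cancel exactly, leaving at least $n$ pairs.

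The main subtlety is the integer bookkeeping. In part (i), the argument must upgrade a non-strict real-valued bound to a strict one in the final counting step, so the sum comes out strictly less than $n$ rather than merely $\leq n$; this rests on the elementary observation that the cardinality of $\{m\in\N : m < x\}$ is strictly less than $x$ whenever $x>0$. In part (ii), the shift from $n$ to $n+K$ in the conclusion is precisely the budget required to absorb one unit of loss per sequence from rounding down in the floor function. Once these rounding issues are handled cleanly, both parts reduce to the single algebraic identity defining $c$.
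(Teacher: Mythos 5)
Your argument is correct and is essentially the paper's own proof in a different notation: your ``number of pairs below a threshold'' is exactly the counting function $\mu(r)=\sum_{k}\mu_k(r)$ used there, with the same floor-function bookkeeping and the same identity $\sum_{k}a_k^{-1/\alpha}=c^{-1/\alpha}$ doing the work in both parts. (One small caution: the stated equivalence ``$a^{(-)}_n\leq t$ iff at least $n$ pairs satisfy $a^{(k)}_m\leq t$'' can fail in the ``only if'' direction when infima are not attained, but you only invoke the ``if'' direction, so the proof is unaffected.)
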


\begin{proof}
For $k\in\set{1,\ldots,K}$ set $a_0^{(k)}=-\infty$ and, for $r\in\R$,  
define the counting functions
\begin{align*} 
\mu_k(r)&:={\rm card}\,\all{n\in\N}{a_n^{(k)}\leq r} \\
\mu(r)&:={\rm card}\,\all{n\in\N}{a^{(-)}_n\leq r}\,.
\end{align*}
The following relations are easily verified. We have 
\begin{equation}
\label{ap:countingfuncrel}
\mu(r)=\sum_{k=1}^K\mu_k(r)\,,
\end{equation}
and 
\begin{align}
  a^{(-)}_{\mu(r)}&\leq r \quad (\forall r\in\R) \label{app:lem3:eq1}\,,\\
  \mu(a^{(-)}_n)&\geq n \quad (\forall n\in\N_0) \label{app:lem3:eq2}\,.
\end{align}
(i) Set $C=\min\set{A_1,\ldots ,A_K}$. Since 
for each $k\in\{1,\ldots,K\}$,
\[ \all{n\in\N}{a_n^{(k)}\leq r}\subset
\all{n\in\N}{a_{k}n^\alpha+C\leq r}\,,\]
we have, for $r\geq C$,  
\[ \mu_k(r)\leq {\rm card}\,\all{n\in\N}{a_{k}n^\alpha+C\leq r}=\left
  \lfloor \left ( \frac{r-C}{a_k} \right )^{1/\alpha} \right
  \rfloor\,,\]
where $\lfloor \cdot \rfloor$ denotes the floor function. Thus, using 
  (\ref{ap:countingfuncrel}), we have
\begin{equation}
\label{cyprus} 
\mu(r)=\sum_{k=1}^K\mu_k(r)
\leq (r-C)^{1/\alpha}\sum_{k=1}^K a_k^{-1/\alpha}\,.
\end{equation}
If $n\in\N$ then $a^{(-)}_n\ge C$, so combining 
(\ref{cyprus}) with (\ref{app:lem3:eq2}) gives
\[(a^{(-)}_n-C)^{1/\alpha}\sum_{k=1}^Ka_k^{-1/\alpha}\geq \mu(a^{(-)}_n)\geq n\,,\]
from which (i) follows. 

(ii)  Set $C=\max\set{A_1,\ldots ,A_K}$. Since 
for each $k\in\{1,\ldots,K\}$,
\[ \all{n\in\N}{a_kn^\alpha +C \leq r}\subset
\all{n\in\N}{a_n^{(k)}\leq r}\,,\]
we have, for $r\geq C$,  
\[ \left
  \lfloor \left ( \frac{r-C}{a_k} \right )^{1/\alpha} \right
  \rfloor = {\rm card}\,\all{n\in\N}{a_{k}n^\alpha+C\leq r}\leq 
\mu_k(r)\,.\]
Thus, using (\ref{ap:countingfuncrel}), we have
\begin{equation}
\label{ap:nur}
\mu(r)=\sum_{k=1}^K\mu_k(r)
\geq \left ((r-C)^{1/\alpha}\sum_{k=1}^K a_k^{-1/\alpha}\right )-K\,.
\end{equation}
Now fix $n_0\in\N$. Choose $r_0\geq C$ such that 
\begin{equation}
\label{ap:rsuchthatn}
n_0=\left ((r_0-C)^{1/\alpha}\sum_{k=1}^K a_k^{-1/\alpha}\right )-K\,.
\end{equation}
From (\ref{ap:nur}) and (\ref{ap:rsuchthatn}) we see that
$n_0\le \mu(r_0)$.
Using (\ref{app:lem3:eq1}),
together with the fact
that $n\mapsto a^{(-)}_n$ is monotonically increasing, now yields  
\[ a^{(-)}_{n_0}\leq a^{(-)}_{\mu(r_0)}\leq r_0=c(n_0+K)^\alpha +C\,.\]
Since $n_0$ was arbitrary, (ii) follows. 
\end{proof}
\begin{corollary}
\label{app:arrangementcoro}
  Let $\alpha>0$ and $K\in \N$. Suppose that for each $k\in
  \set{1,\ldots,K}$ we are given a real sequence
  $b^{(k)}$, a positive constant $a_k>0$, and a real
  number $B_k$. Let $b^{(+)}$ denote the decreasing arrangement
  of the $K$ sequences 
$b^{(1)},\ldots,b^{(K)}$ and define 
\[ c:=\left ( \sum_{k=1}^Ka_k^{-1/\alpha} \right
)^{-\alpha}\,.\] 
\begin{itemize}
\item[(i)] If 
\[ b_n^{(k)}\leq B_k \exp(-a_kn^\alpha) \quad( \forall n\in \N,\,k\in
\set{1,\ldots,K})\,,\]
then 
\[ b^{(+)}_n\leq B\exp(-cn^\alpha) \quad (\forall n\in\N)\,,\]
where $B=\max\set{B_1,\ldots,B_K}$.  
\item[(ii)] If 
\[ b_n^{(k)}\geq B_k \exp(-a_kn^\alpha) \quad( \forall n\in \N,\,k\in
\set{1,\ldots,K})\,,\]
then 
\[ b^{(+)}_n\geq B\exp(-c(n+K)^\alpha) \quad (\forall n\in\N)\,,\]
where $B=\min\set{B_1,\ldots,B_K}$.  
\end{itemize}
\end{corollary}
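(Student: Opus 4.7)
The plan is to reduce the corollary to the preceding proposition by passing to logarithms, which converts stretched-exponential decay into polynomial growth and swaps decreasing arrangements with increasing ones.

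For each $k$, set $a_n^{(k)} := -\log b_n^{(k)}$, with the convention $-\log 0 = +\infty$ so that the transformation is well-defined on nonnegative sequences. The first point to verify is the key compatibility statement: the increasing arrangement $a^{(-)}$ of the $K$ sequences $a^{(1)},\ldots,a^{(K)}$ coincides with $-\log$ applied termwise to the decreasing arrangement $b^{(+)}$ of $b^{(1)},\ldots,b^{(K)}$. This should follow directly from the definition, since $t\mapsto -\log t$ is a strictly decreasing bijection $(0,\infty)\to\R$ (extended to $\{0\}\mapsto+\infty$) and the $\|\cdot\|_+$ and $\|\cdot\|_-$ extremal characterisations in the definition of $(\cdot)^{(\pm)}$ interchange under order reversal. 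I would just spell this out in one line.

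With that translation in place, the two assertions reduce to the two parts of the proposition as follows. For (i), the hypothesis $b_n^{(k)}\leq B_k\exp(-a_kn^\alpha)$ rewrites as $a_n^{(k)}\geq a_kn^\alpha + A_k$ with $A_k:=-\log B_k$, so Proposition~(i) yields
\[ a^{(-)}_n \geq cn^\alpha + \min_k A_k = cn^\alpha - \log B, \qquad B=\max_k B_k, \]
and exponentiating the negative gives $b^{(+)}_n\leq B\exp(-cn^\alpha)$. For (ii), the inequality $b_n^{(k)}\geq B_k\exp(-a_kn^\alpha)$ (with all $B_k>0$) rewrites as $a_n^{(k)}\leq a_kn^\alpha + A_k$ with $A_k:=-\log B_k$, so Proposition~(ii) gives $a^{(-)}_n\leq c(n+K)^\alpha + \max_k A_k = c(n+K)^\alpha - \log B$ with $B=\min_k B_k$, and exponentiation yields the claim.

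The only genuine obstacle is the bookkeeping at the edges: allowing the sequences to take the value zero (in (i)) without breaking the logarithmic identification of arrangements, and confirming that min/max are correctly interchanged under negation. Both are routine once the convention on $-\log 0$ is fixed and the definitions of $a^{(\pm)}$ are read carefully, so the proof is essentially a two-line translation followed by a citation of the proposition.
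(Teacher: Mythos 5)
Your proof is correct and follows exactly the route the paper intends: the corollary is stated without proof precisely because it is the logarithmic translation of the preceding proposition, as the paper itself signals by remarking that it is "preferable to work with the logarithms of reciprocals of such sequences." Your identification of $-\log b^{(+)}$ with the increasing arrangement of the $-\log b^{(k)}$, and the resulting swap of $\min$ and $\max$ under negation, are exactly the required bookkeeping.
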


\subsection*{Acknowledgements} 
I would like to thank Oliver Jenkinson for the many 
discussions that helped to shape this article, in particular the
results in Section~\ref{section:expoclass} and in the Appendix. 
The research described in this article was
partly supported by EPSRC Grant GR/R64650/01.

\end{document}